\numberwithin{equation}{section}
\newtheorem{Def}{Definition}[section]
\newtheorem{Thm}{Theorem}[section]
\newtheorem{Prop}{Proposition}[section]
\newtheorem{Lem}{Lemma}[section]
\def \eps{{\epsilon}}
\def \AA{{\mathcal A}}
\def \BB{{\mathcal B}}
\def \CC{{\mathcal C}}
\def \EE{{\mathcal E}}
\def \FF{{\mathcal F}}
\def \MM{{\mathcal M}}
\def \RR{{\mathcal R}}
\def \SS{{\mathcal S}}
\def \TT{{\mathcal T}}
\def \XX{{\mathcal X}}
\def \YY{{\mathcal Y}}
\def \ASf{{\mathsf A}}
\def \aSf{{\mathsf a}}
\def \bSf{{\mathsf b}}
\def \inttwo{\int \!\!\! \int }
\def \dee{{\mathrm d}}
\def \domega{{\dee \omega}}
\def \ds{{\dee s}}
\def \dt{{\dee t}}
\def \dx{{\dee x}}
\def \dv{{\dee v}}
\def \del{{\partial}}
\def \GRAD{\nabla_{\!\!x}}
\def \DIV{\nabla_{\!\!x} \! \cdot }
\def \CROSS{{\,\times\,}}
\def \DOT{{\,\cdot\,}}
\def \BAR{\overline}
\def \<{\langle}
\def \>{\rangle}
\def\R{{\mathbb R}}      
\def\D{{\mathrm{D}}}
\def\RD{{{\mathbb R}^{\D}}}
\def\SD{{{\mathbb S}^{\D-1}}}
\def\TD{{{\mathbb T}^{\D}}}
\def\Rp{{\R_+}}
\def\RDR{{\RD \! \times \R}}
\def\RDRD{{\RD \! \times \RD}}
\def\RDRDR{{\RD \! \times \RD \! \times \R}}
\def\SDRD{{\SD \! \times \RD}}
\def \init{{\mathrm{in}}}
\begin{document}

\title[Boltzmann equation near global Mawellians]{Global Solutions of the Boltzmann Equation over $\RD$\\ near Global Maxwellians with Small Mass}
 
\author[C. Bardos]{Claude Bardos}
\address[C.B.]{Laboratoire J.-L. Lions, BP187, 75252 Paris Cedex 05, France}
\email{claude.bardos@gmail.com}

\author[I.M. Gamba]{Irene M. Gamba}
\address[I. G.]{University of Texas, ICES and Dept. of Mathematics. Austin, TX 78712-1082, USA}
\email{gamba@math.utexas.edu}

\author[F. Golse]{Fran\c cois Golse}
\address[F.G.]{Ecole Polytechnique, CMLS, 91128 Palaiseau Cedex, France}
\email{francois.golse@math.polytechnique.fr}

\author[C.D. Levermore]{C. David Levermore}
\address[C.D. L.]{University of Maryland, IPST and Dept. of Mathematics, College Park, MD 20742-2431, USA}
\email{lvrmr@math.umd.edu}

\begin{abstract}
We study the dynamics defined by the Boltzmann equation set in the Euclidean space $\RD$ in the vicinity of global Maxwellians with finite mass. A global Maxwellian is a special solution of the Boltzmann equation for 
which the collision integral vanishes identically. In this setting, the dispersion due to the advection operator quenches the dissipative effect of the Boltzmann collision integral. As a result, the large time limit of solutions
of the Boltzmann equation in this regime is given by noninteracting, freely transported states and can be described with the tools of scattering theory.
\end{abstract}

\keywords{Boltzmann equation, Global Maxwellian, Boltzmann collision integral, Boltzmann H Theorem, Free transport, Large time limit, Scattering operator}

\subjclass{82C40, 35Q20, 35B40}

\maketitle
 
\section{Introduction}\label{S-Intro}

In kinetic theory, the state of a monatomic gas is described by its distribution function $F\equiv F(v,x,t)\ge 0$, that is the number density at time $t$ of gas molecules with velocity $v\in\RD$ located at the position $x\in\RD$. 
The distribution function is governed by the Boltzmann equation
\begin{equation}
\label{BoltzEq0}
\del_tF+v\DOT\GRAD F=\BB(F,F)\,,
\end{equation}
where $\BB(F,F)$ is a quadratic integral operator acting on the $v$ variable only, known as the Boltzmann collision integral. The collision integral has a rather complicated expression whose details are not needed in this 
introduction. Suffice it to say that all the information on molecular interaction needed for the kinetic description of a gas is encoded in the collision kernel $\bSf(\omega,V)$, a nonegative function of the relative velocity $V$ 
of colliding particle pairs and of a unit vector $\omega$ that measures the deviation of relative velocity before and after collision. The explicit formula of the Boltzmann collision integral and its dependence on the collision
kernel $\bSf$ will be given in section \ref{SS-CollKer} below.

The present paper investigates the long time behavior of solutions $F\equiv F(v,x,t)$ of the Boltzmann equation (\ref{BoltzEq0}) on $\RDRD\CROSS\Rp$ satisfying some appropriate decay conditions as $|x|+|v|\to\infty$, implying
in particular that
\begin{equation}\label{DecayL1w}
\iint_\RDRD(1+|x|^2+|v|^2)F(v,x,t)\,\dv\dx<\infty\hbox{ for each }t\ge 0\,.
\end{equation}

The Boltzmann equation set in the spatial domain $\RD$ involves two very different mechanisms, namely dispersion and relaxation to local equilibrium. 

Dispersion is associated to the free transport equation, and one of its manifestations is the following observation. Let $f\equiv f(v,x,t)$ be a solution of
$$
\del_tf+v\DOT\GRAD f=0\,,\qquad f\big|_{t=0}=f^\init\,,
$$
and assume that $f^\init\equiv f^\init(v,x)$ satisfies a bound of the form
$$
0\le f^\init(v,x)\le\phi(x)\qquad\hbox{ for a.e. }(v,x)\in\RDRD\,,
$$
where $\phi\in L^1(\RD)$. Then, for each $t>0$, the macroscopic density $\rho$ associated to $f$ satisfies
$$
0\le\rho(x,t):=\int_\RD f(v,x,t)\,\dv\le\int_\RD\phi(x-tv)\,\dv=\frac1{t^\D}\int_\RD\phi(y)\,\dee y=\frac1{t^\D}\|\phi\|_{L^1}
$$
for a.e. $x\in\RD$. In particular 
$$
\rho(x,t)\to 0\quad\hbox{ as }t\to+\infty\hbox{ for a.e. }x\in\RD\,.
$$

Relaxation to local equilibrium is associated to the collision integral and can be more or less formulated as follows. Let $g\equiv g(v,t)$ be a solution of the space homogeneous Boltzmann equation
$$
\del_tg=\BB(g,g)\,,\qquad g\big|_{t=0}=g^\init\,,
$$
where $g^\init\equiv g^\init(v)$ satisfies the assumptions
$$
g^\init(v)\ge 0\hbox{ for a.e. }v\in\RD\,\quad\hbox{ and }\int_\RD g^\init(v)(1+|v|^2)\,\dv<+\infty\,.
$$
In the limit as $t\to+\infty$, we expect that $g(v,t)$ converges to a Maxwellian distribution, i.e.
$$
g(t,v)\to M[\rho^\infty,u^\infty,\theta^\infty](v):=\frac{\rho^\infty}{(2\pi\theta^\infty)^{\frac{\D}2}}\exp\left(-\frac{|v-u^\infty|^2}{2\theta^\infty}\right)\,,
$$
where 
$$
\rho^\infty:=\int_\RD g^\init (v)\,\dv\,,\quad u^\infty:=\frac1{\rho^\infty}\int_\RD g^\init (v)\,\dv\,,\quad \theta^\infty:=\frac1{\rho^\infty}\int_\RD\tfrac1\D |v-u^\infty|^2g^\init(v)\,\dv
$$
if $\rho^\infty>0$, while $g(v,t)=0$ for a.e. $(v,t)\in\RDRD$ if $\rho^\infty=0$. 

In general, dispersion and relaxation to local equilibrium are competing mechanisms, because the effect of molecular collisions at the position $x\in\RD$ obviously vanishes if the macroscopic density $\rho(x,t)\to 0$ 
as $t\to+\infty$. For instance, dispersion is used in \cite{IllnerShin,Hamda,Arsenio} to control the nonlinear collision integral, and to establish the global existence of solutions of the Boltzmann equation. 

However, these two mechanisms cooperate to produce a remarkable class of explicit solutions of the Boltzmann equation in the spatial domain $\RD$, henceforth referred to as \textit{global Maxwellians}.

\begin{Def} A global Maxwellian is a distribution function $\MM\equiv\MM(v,x,t)$ satisfying both
$$
\del_t\MM+v\DOT\GRAD\MM=0\quad\hbox{ and }\quad\BB(\MM,\MM)=0\,.
$$
\end{Def}

An example of global Maxwellian is
$$
\MM(v,x,t):=e^{-|x-tv|^2}\,.
$$

A complete description of global Maxwellians with finite mass can be found in \cite{Lvrmr}. More precisely, the main result in \cite{Lvrmr} is the following variational characterization. 

Given $F^\init\in L^1(\RDRD,(1+|x|^2+|v|^2)\dx\dv)$ such that $F^\init\ge 0$ a.e. on $\RDRD$, there exists a unique global Maxwellian $\MM_{F^\init}$ such that\footnote{For $a,b\in\RD$, the notation $a\wedge b$ 
designates the skew-symmetric tensor $a\otimes b-b\otimes a$.}
$$
\iint_{\RDRD}\left(\begin{matrix}1\\ v\\ |v|^2\\ x-tv\\ |x-tv|^2\\(x-tv)\cdot v\\ x\wedge v\end{matrix}\right)\MM_{F^\init}(v,x,t)\,\dv\dx
=
\iint_{\RDRD}\left(\begin{matrix}1\\ v\\ |v|^2\\ x\\ |x|^2\\ x\cdot v\\ x\wedge v\end{matrix}\right)F^{\init}(v,x)\,\dv\dx
$$
for all $t\in\R$. Notice that the left hand side of the equality above is independent of $t$. Indeed, any global Maxwellian $\MM$ satisfies $\MM(v,x,t)=\MM(v,x-tv,0)$ for all $(v,x,t)\in\RDRDR$.

Moreover, for each $t\in\R$, the function $(v,x)\mapsto\MM_{F^\init}(v,x,t)$ satisfies the following variational property:
$$
H[f]:=\iint_{\RDRD}f\ln f(v,x)\,\dv\dx\ge H[\MM_{F^\init}(t)]=H[\MM_{F^\init}(0)]
$$
for all a.e. nonnegative $f\in L^1(\RDRD,(1+|x|^2+|v|^2)\dx\dv)$ such that
$$
\iint_{\RDRD}\left(\begin{matrix}1\\ v\\ |v|^2\\ x-tv\\ |x-tv|^2\\ (x-tv)\cdot v\\ x\wedge v\end{matrix}\right)f(v,x)\,\dv\dx
=\iint_{\RDRD}\left(\begin{matrix}1\\ v\\ |v|^2\\ x\\ |x|^2\\ x\cdot v\\ x\wedge v\end{matrix}\right)F^{\init}(v,x)\,\dv\dx\,,
$$
with equality if and only if $f(v,x)=\MM_{F^\init}(v,x,t)$ for a.e. $(v,x)\in\RDRD$. (See section 1.5 in \cite{Lvrmr} for more details.)

\medskip
The purpose of the present work is to study the dynamics defined by the Boltzmann equation near global Maxwellians in the Euclidean space $\RD$. In particular, we establish the existence and uniqueness of solutions in that 
regime and analyze in detail the large time behavior of these solutions. More precisely, solutions $F$ of the Boltzmann equation (\ref{BoltzEq0}) over $\RDRD\CROSS\Rp$ such that $F\Big|_{t=0}=F^\init$ satisfy 
the global conservation laws
$$
\iint_{\RDRD}\left(\begin{matrix}1\\ v\\ |v|^2\\ x-tv\\ |x-tv|^2\\ (x-tv)\cdot v\\ x\wedge v\end{matrix}\right)F(v,x,t)\,\dv\dx
=\iint_{\RDRD}\left(\begin{matrix}1\\ v\\ |v|^2\\ x\\ |x|^2\\ x\cdot v\\ x\wedge v\end{matrix}\right)F^\init(v,x)\,\dv\dx
$$
for all $t\ge 0$, under some appropriate decay condition on $F$ that implies (\ref{DecayL1w}). (See Theorem B below for a precise statement.) On the other hand, under a decay condition more stringent than (\ref{DecayL1w}), 
Boltzmann's H Theorem (see below) asserts that the function 
$$
t\mapsto H[F(t)]
$$
is nonincreasing for each solution $F$ of the Boltzmann equation (\ref{BoltzEq0}).

Together with the variational chacterization of global Maxwellians recalled above, this raises the following question, which is at the core of the present paper.

\smallskip
\noindent
\textbf{Problem.} \textit{Let $F$ be a solution of the Cauchy problem for the Boltzmann equation (\ref{BoltzEq0}) with initia data $F^\init$ satisfying appropriate decay conditions at infinity, implying in particular (\ref{DecayL1w}).
In the limit as $t\to+\infty$, does $F(t)$ converge (in some sense) to the state of maximal entropy (or minimal $H$-function) compatible with the global conservation laws satisfied by $F$? In particular, does $H[F(t)]$ converge 
to $H[\MM_{F^\init}]$ as $t\to+\infty$?}

\smallskip
The main result in the present work is that this question is answered in the negative.

\section{Main results}\label{S-Main}

\subsection{Background on global Maxwellians}\label{GlobalMaxw}

We first recall the complete description of global Maxwellians from \cite{Lvrmr}.

\smallskip
\noindent
\textbf{Theorem A.} \textit{The family of all global Maxwellians over the spatial domain $\RD$ and belonging to $L^\infty(\R_t;L^1(\RDRD))$ is of the form
$$
\MM(v,x,t)=\frac{m}{(2\pi)^\D}\sqrt{\det(Q)}\exp(-q(v-v_0,x-x_0,t))\,,
$$
with
$$
q(v,x,t)=\tfrac12(c|v|^2+a|x-tv|^2+2b(x-tv)\cdot v+v\cdot B(x-tv))\quad\hbox{ and }Q=(ac-b^2)I+B^2\,,
$$
where $m,a,c>0$, $b\in\R$, $x_0,v_0\in\RD$ and $B$ is a skew-symmetric $\D\CROSS\D$ matrix with real entries such that the symmetric matrix $Q$ is definite positive.}

\smallskip
Henceforth we denote by $\Omega$ the set
$$
\Omega:=\{(a,b,c,B)\in\R^3\CROSS\R^{\D\times\D}\,\,\hbox{Ês.t. }a,c>0\,,\,\,B=-B^T\,,\,\,\hbox{ and }(ac-b^2)I+B^2>0\}\,.
$$

\smallskip
With the notation
$$
M[\rho,u,\theta](v):=\frac{\rho}{(2\pi\theta)^{\frac{\D}2}}e^{-\frac{|v-u|^2}{2\theta}}\,,
$$
elementary computations show that
$$
\MM(v,x,t)=M[\rho(x,t),u(x,t),\theta(t)](v)\,,
$$
with
\begin{equation}\label{rho-u=}
\rho(x,t)=m\theta(t)^{\frac{\D}{2}}\sqrt{\det(\tfrac{Q}{2\pi})}\exp(-\tfrac12\theta(t)x^TQx)\,,\quad u(x,t)=\theta(t)(axt-bx-Bx)\,,
\end{equation}
and
\begin{equation}\label{theta=}
\theta(t)=\frac{1}{at^2-2bt+c}\,.
\end{equation}

\smallskip
Given any global Maxwellian $\MM$ on $\RDRDR$, we consider the Banach spaces
$$
\XX_\MM:=\MM L^\infty(\RDRDR)\,,\quad\hbox{ with norm }\|F\|_\MM:=\|F/\MM\|_{L^\infty(\RDRDR)}\,,
$$
and
$$
\YY_{\MM(0)}:=\MM(0)L^\infty(\RDRD)\,,\quad\hbox{ with norm }|f|_{\MM(0)}:=\|f/\MM(0)\|_{L^\infty(\RDRD)}\,.
$$

\subsection{Assumptions on the collision kernel}\label{SS-CollKer}

Henceforth, we assume that the collision kernel has separated form, i.e.
$$
\bSf(z,\omega)=|z|^\beta\Hat\bSf(\omega\cdot n)\qquad\hbox{ with }n=z/|z|\,,
$$
and satisfies the weak cutoff condition
$$
\BAR\bSf:=\int_\SD\Hat\bSf(\omega\cdot n)\domega<\infty\,.
$$
Such a collision kernel will be said to correspond to a ``hard'' potential for the molecular interaction if $\beta\in(0,1]$, and to a ``soft'' potential if $\beta\in(-\D,0)$. The case $\beta=0$ corresponds to an assumption 
made by Maxwell in \cite{Maxw67}, and is referred to as the case of ``Maxwell molecules''. The case of hard sphere collisions is the case where $\bSf(z,\omega)=|z\cdot\omega|$. The case $\beta\in(1,2]$ is referred 
to as ``super-hard''; it does not arise from any radial, inverse power law potential and is therefore of limited physical interest.

The collision integral is defined in terms of the collision kernel as follows. For each measurable $F\equiv F(v,x,t)$ defined a.e. on $\RDRD\CROSS I$ where $I$ is an interval of $\R$ and satisfying 
\begin{equation}\label{|F|<M}
|F(v,x,t)|\le\MM(v,x,t)\quad\hbox{ for a.e. }(v,x,t)\in\RDRD\CROSS I
\end{equation}
for some global Maxwellian $\MM$, one has
$$
\BB(F,F)(v,x,t)=\iint_\SDRD(F(v',x,t)F(v'_*,x,t)-F(v,x,t)F(v_*,x,t))\bSf(v-v_*,\omega)\domega\dv_*\,.
$$
The velocities $v'$ and $v'_*$ are defined in terms of $v$, $v_*$ and $\omega$ by the formulas
$$
v'=v-(v-v_*)\cdot\omega\omega\,,\quad v'_*=v_*+(v-v_*)\cdot\omega\omega\,.
$$
These formulas give the general solution $(v',v'_*)\in\RDRD$ of the system of equations
$$
v'+v'_*=v+v_*\,,\quad|v'|^2+|v'_*|^2=|v|^2+|v_*|^2\,,
$$
where $v$ and $v_*$ are given vectors in $\RD$. Henceforth we use the notation
$$
F=F(v,x,t)\,,\quad F_*=F(v_*,x,t)\,,\quad F'=F(v',x,t)\,,\quad F'_*=F(v'_*,x,t)\,,
$$
which is customary in the literature on the Boltzmann equation. 

Since we are dealing with cutoff kernels throughout the present work, the Boltzmann collision integral can be decomposed into gain and loss terms, denoted respectively $\BB_+(F,F)$ and $\BB_-(F,F)$, and defined
as follows
$$
\begin{aligned}
\BB_+(F,F)(v,x,t)&=\iint_\SDRD F(v',x,t)F(v'_*,x,t)\bSf(v-v_*,\omega)\domega\dv_*\,,
\\
\BB_-(F,F)(v,x,t)&=\iint_\SDRD F(v,x,t)F(v_*,x,t)\bSf(v-v_*,\omega)\domega\dv_*\,.
\end{aligned}
$$
The loss term can be recast as
$$
\BB_-(F,F)(v,x,t):=F(v,x,t)\AA(F)(v,x,t)\,,
$$
with
$$
\AA(F)(v,x,t):=\inttwo_\SDRD F(v_*,x,t)\bSf(v-v_*,\omega)\domega\dv_*\,.
$$
Integrating first in the $\omega$ variable, the term $\AA(F)$ takes the form
$$
\AA(F)(v,x,t)=\BAR\bSf\int_\RD F(v_*,x,t)|v-v_*|^\beta\dv_*\,.
$$

In particular, if $F$ is a global Maxwellian as in Theorem A, one has
\begin{equation}\label{AA(MM)=}
\begin{aligned}
\AA(\MM)(v,x,t)&=\rho(x,t)\theta(t)^{\frac{\beta}2}\aSf_\beta\left(\frac{v-u(x,t)}{\sqrt{\theta(t)}}\right)
\\
&=m\BAR\bSf\sqrt{\det(\tfrac{Q}{2\pi})}\theta(t)^{\frac{\D+\beta}2}\exp(-\tfrac12\theta(t)x^TQx)\aSf_\beta\left(\frac{v-u(x,t)}{\sqrt{\theta(t)}}\right)\,,
\end{aligned}
\end{equation}
with the notation
\begin{equation}\label{aSf=}
\aSf_\beta(w):=\int_\RD|w-w_*|^\beta M[1,0,1](w_*)\dee w_*\,.
\end{equation}

In the sequel, we shall use repeatedly the following elementary estimate: for all $F,G\in\XX_\MM$, one has
\begin{equation}\label{BB-<}
|\BB_-(F,G)(v,x,t)|\le\|F\|_\MM\|G\|_\MM\BB_-(\MM,\MM)=\|F\|_\MM\|G\|_\MM\AA(\MM)\MM\,,
\end{equation}
and
\begin{equation}\label{BB+<}
\begin{aligned}
|\BB_+(F,G)(v,x,t)|&\le\|F\|_\MM\|G\|_\MM\BB_+(\MM,\MM)&
\\
&=\|F\|_\MM\|G\|_\MM\BB_-(\MM,\MM)=\|F\|_\MM\|G\|_\MM\AA(\MM)\MM\,,
\end{aligned}
\end{equation}
where the penultimate equality follows from the identity $\BB(\MM,\MM)=0$.

\subsection{Mild solutions of the Boltzmann equation and their fundamental properties}\label{SS-MildSol}

We shall henceforth use the notation $\ASf$ to designate the advection operator, i.e. $\ASf\phi=v\DOT\GRAD\phi$, which is the infinitesimal generator of the one-parameter group $e^{t\ASf}$ defined by the formula
$$
e^{t\ASf}\phi(x,v)=\phi(x+tv,v)\,.
$$

Throughout the present paper, we shall use the following notion of solution of the Boltzmann equation.

\begin{Def}\label{D-MildSol}
\smallskip
A mild solution of the Boltzmann equation is a function $F\equiv F(v,x,t)$ belonging to $L^1_{loc}(\RDRD\CROSS I)$ where $I$ is an interval of $\R$, such that $\BB(F,F)\in L^1_{loc}(\RDRD\CROSS I)$ and
$$
e^{t_2\ASf}F(v,x,t_2)=e^{t_1\ASf}F(v,x,t_1)+\int_{t_1}^{t_2}e^{s\ASf}\BB(F,F)(v,x,s)\,\ds
$$
for a.e. $(v,x)\in\RDRD$ and $t_1,t_2\in I$. In particular, $F$ is a.e. equal to a unique element of $C(I;L^1_{loc}(\RDRD))$, to which it will be henceforth identified.
\end{Def}

\smallskip
Mild solutions of the Boltzmann equations with appropriate decay condition as $|x|+|v|\to\infty$ satisfy basic conservation properties, which are summarized in the following statement.

\smallskip
\noindent
\textbf{Theorem B.} \textit{Let $F\equiv F(v,x,t)$ be a measurable function defined a.e. on $\RDRD\CROSS I$ where $I$ is an open interval of $\R$ and satisfying the bound (\ref{|F|<M}). Then}

\smallskip
\noindent
\textit{(a) for a.e. $(x,t)\in\RD\CROSS I$
$$
\int_\RD\BB(F,F)(v,x,t)\left(\begin{matrix}1\\ v\\ \tfrac12|v|^2\end{matrix}\right)\dv=0\,.
$$}

\smallskip
\textit{Assume moreover that $F$ is a mild solution of the Boltzmann equation in the sense of distributions on $\RDRD\CROSS I$. Then}

\noindent
\textit{(b) the function $F$ satisfies the global conservation laws
$$
\frac{d}{dt}\iint_\RDRD\left(\begin{matrix}1\\ v\\ \tfrac12|v|^2\\ x-tv\\ \tfrac12|x-tv|^2\\(x-tv)\cdot v\\ x\wedge v\end{matrix}\right)F(v,x,t)\dv=0
$$
in the sense of distributions on $I$.}

\smallskip
In addition, the Boltzmann equation satisfies a dissipation property, well known under the name of ``Boltzmann's H theorem'', which is recalled below.

\smallskip
\noindent
\textbf{Boltzmann's H Theorem.} \textit{Let $F\equiv F(v,x,t)$ be a measurable function defined a.e. on $\RDRD\CROSS I$ where $I$ is an open interval of $\R$ and satisfying the bound 
$$
0\le F(v,x,t)\le\MM(v,x,t)\,,\quad\hbox{ for a.e. }(v,x,t)\in\RDRD\CROSS I
$$
where $\MM$ is a global Maxwellian. Then}

\smallskip
\noindent
\textit{(a) for a.e. $(x,t)\in\RD\CROSS I$
$$
\int_\RD\BB(F,F)(v,x,t)\ln F(v,x,t)\dv\le 0\,,
$$
(b) the inequality above is an equality if and only if $\BB(F,F)=0$ a.e. on $\RDRD\CROSS I$ or, equivalently, if and only if $F$ is a local Maxwellian, i.e. there exists $\rho\equiv\rho(x,t)\ge 0$ and $\theta\equiv\theta(x,t)>0$, 
and a vector field $u\equiv u(x,t)\in\RD$ such that 
$$
F(v,x,t)=M[\rho(x,t),u(x,t),\theta(x,t)](v)\,,
$$}

\smallskip
\textit{Assume moreover that $F$ is a mild solution of the Boltzmann equation on $\RDRD\CROSS I$ satisfying the lower bound
$$
F(v,x,t)\ge\alpha\MM(v,x,t)\,,\quad\hbox{ for a.e. }(v,x,t)\in\RDRD\CROSS I\,,
$$
where $\alpha\in(0,1)$. Then}

\noindent
\textit{(c) the Boltzmann $H$ function associated to $F$ defined as
$$
H[F](t):=\iint_\RDRD F\ln F(v,x,t)\dv
$$
satisfies 
$$
\frac{dH[F]}{dt}(t)=\iint_\RDRD\BB(F,F)(v,x,t)\ln F(v,x,t)\dv\le 0
$$
in the sense of distributions on $I$.}

\smallskip
Theorem B (a) and Boltzmann's H Theorem (a)-(b) are classical properties of the Boltzmann equation, and are discussed in most books on the Boltzmann equation, for instance \cite{Cerci88,BGP}. See also Corollary 3.2 and 
Proposition 3.3 in \cite{Golse2005} for proofs based on assumptions slightly more general than those used in the present paper. Proofs of Theorem B (b) and of part (c) of the H Theorem are given in the appendix for the reader's 
convenience.

\subsection{Dispersion vs. dissipation}\label{SS-DispDissip}

Let $\MM$ be a global Maxwellian, of the form
$$
\MM(v,x,t):=\frac{m}{(2\pi)^\D}\sqrt{\det Q}e^{-q(v,x,t)}
$$
where
$$
q(v,x,t)=\tfrac12\left(\begin{matrix}v\\ x-tv\end{matrix}\right)^T\left(\begin{matrix}cI\,\,&bI-B\\ bI+B&aI\end{matrix}\right)\left(\begin{matrix}v\\ x-tv\end{matrix}\right)\,,\quad Q:=(ac-b^2)I+B^2
$$
with $(a,b,c,B)\in\Omega$.

\begin{Lem}\label{L-Mu}
Assume that the collision kernel $b$ has separated form with $\beta\in(-\D,0]$ and let $\MM$ be a global Maxwellian as in Theorem A. Then
$$
\|\AA(\MM)(t)\|_{L^\infty(\RDRD)}\le m\BAR\bSf\sqrt{\det(\tfrac{Q}{2\pi})}\aSf_\beta(0)\theta(t)^{\frac{\D+\beta}2}\,,
$$
with
$$
\aSf_\beta(0)=2^{\beta/2}\Gamma(\tfrac{\D+\beta}2)/\Gamma(\tfrac{\D}2)>0\,.
$$
Moreover, if $\beta\in(1-\D,0]$ then
\begin{equation}\label{DefMu}
\mu(\MM):=\int_\R\|\AA(\MM)(t)\|_{L^\infty(\RDRD)}\dt<\infty\,.
\end{equation}
\end{Lem}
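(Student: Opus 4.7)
The first bound follows directly from the explicit formula \eqref{AA(MM)=}. Since $Q$ is positive definite and $\theta(t)>0$, we have $\exp(-\tfrac12\theta(t)x^TQx)\le 1$. It therefore remains only to show that
\[
\sup_{w\in\RD}\aSf_\beta(w)=\aSf_\beta(0)\qquad\hbox{for all }\beta\in(-\D,0],
\]
and then to evaluate $\aSf_\beta(0)$ in closed form.

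\textbf{Step 1 (radial monotonicity of $\aSf_\beta$).} First, a change of variables $w_*\mapsto R w_*$ for any rotation $R\in O(\D)$, using that $M[1,0,1]$ is rotation invariant, shows that $\aSf_\beta$ is radial, so it is enough to prove $r\mapsto\aSf_\beta(re_1)$ is nonincreasing on $[0,\infty)$. Differentiating under the integral and then translating $w_*=u+re_1$ in the integral,
\[
\tfrac{d}{dr}\aSf_\beta(re_1)=-\beta(2\pi)^{-\D/2}e^{-r^2/2}\int_\RD|u|^{\beta-2}u_1e^{-|u|^2/2}e^{-ru_1}\,du.
\]
The last integral vanishes at $r=0$ by oddness in $u_1$, and differentiating in $r$ once more yields a strictly negative quantity; hence it is $\le 0$ for $r\ge 0$. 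Since $\beta\le 0$, the overall sign gives $\tfrac{d}{dr}\aSf_\beta(re_1)\le 0$. (Equivalently, the inequality is a direct consequence of the Riesz rearrangement inequality, as $M[1,0,1]$ and $|\cdot|^\beta$ for $\beta\le 0$ are both symmetric decreasing radial functions.)

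\textbf{Step 2 (computing $\aSf_\beta(0)$).} Passing to spherical coordinates,
\[
\aSf_\beta(0)=\frac{|\SD|}{(2\pi)^{\D/2}}\int_0^\infty r^{\D+\beta-1}e^{-r^2/2}\,dr,
\]
and the substitution $s=r^2/2$ identifies the integral with $2^{(\D+\beta)/2-1}\Gamma\!\left(\tfrac{\D+\beta}2\right)$. Combined with $|\SD|=2\pi^{\D/2}/\Gamma(\D/2)$, this gives the stated value $\aSf_\beta(0)=2^{\beta/2}\Gamma(\tfrac{\D+\beta}2)/\Gamma(\tfrac{\D}2)$, which is finite and positive precisely because $\D+\beta>0$.

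\textbf{Step 3 (integrability in time).} From \eqref{theta=}, $\theta(t)=1/(at^2-2bt+c)$. Because $(a,b,c,B)\in\Omega$ and $B^2\le 0$ (being the square of a real skew-symmetric matrix), the positive definiteness of $Q=(ac-b^2)I+B^2$ forces $ac-b^2>0$; completing the square then gives
\[
at^2-2bt+c=a\!\left(t-\tfrac{b}{a}\right)^{\!2}+\tfrac{ac-b^2}{a}\ge\tfrac{ac-b^2}{a}>0,
\]
so $\theta$ is bounded on $\R$ and $\theta(t)\sim 1/(at^2)$ as $|t|\to\infty$. The bound from Step 1--2 reduces $\mu(\MM)$ to a constant multiple of $\int_\R\theta(t)^{(\D+\beta)/2}\,dt$, which converges at infinity if and only if $(\D+\beta)/2\cdot 2>1$, i.e.\ $\beta>1-\D$. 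This is exactly the hypothesis $\beta\in(1-\D,0]$.

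\textbf{Main obstacle.} The only nonroutine point is Step 1: showing that $\aSf_\beta$ attains its maximum at the origin. Once this monotonicity is in hand, the remaining steps are direct evaluations of Gaussian integrals and a one-variable integrability check on the explicit function $\theta(t)$.
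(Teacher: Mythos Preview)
Your proof is correct and follows essentially the same line as the paper's: the key step is that $\aSf_\beta$ attains its maximum at the origin, after which everything reduces to Gaussian integrals and the $O(1/t^2)$ decay of $\theta$. The paper establishes the monotonicity via the elementary symmetrization identity
\[
\int f(|y|)g(|y|)\,dy-\int f(|x-y|)g(|y|)\,dy=\tfrac12\int\bigl(f(|y|)-f(|x-y|)\bigr)\bigl(g(|y|)-g(|x-y|)\bigr)\,dy\ge 0
\]
for nonincreasing radial $f,g$ (in the spirit of Hardy--Littlewood--P\'olya), which is exactly what your parenthetical Riesz-rearrangement remark invokes.

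One technical caveat on your primary differentiation argument: the integral you call $I(r)=\int_\RD|u|^{\beta-2}u_1\,e^{-|u|^2/2}e^{-ru_1}\,du$ has a singularity of order $|u|^{\beta-1}$ at the origin, which is locally integrable only when $\beta>1-\D$. Since the lemma covers the full range $\beta\in(-\D,0]$, the derivative computation is not justified for $\beta\in(-\D,1-\D]$. Your Riesz/rearrangement alternative (equivalently, the paper's symmetrization lemma) applies without this restriction and should be taken as the actual proof of Step~1; the differentiation argument is fine as a heuristic but not as the main justification across the whole stated range.
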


The next  lemma shows the effect of dispersion induced by the free transport operator on the damping coefficient in the loss term of the Boltzmann collision integral. By integrating first in the time variable before taking the 
sup norm in $x$ and $v$, one gains one extra power of the relative velocity in the collision kernel $\bSf$. Therefore, this lemma applies to all cutoff collision kernels corresponding to hard as well as  soft potentials, unlike 
Lemma \ref{L-Mu}. It extends the computation on pp. 221-222 of \cite{IllnerShin} (see also formula (3.5) in \cite{Hamda}) to the larger class of global Maxwellians described in \cite{Lvrmr} and considered in the present work.

\begin{Lem}\label{L-Nu}
Assume that the collision kernel $\bSf$ has separated form with $\beta\in(1-D,1]$. Under the assumptions above, the function
$$
(v,x,t)\mapsto\int_J\AA(\MM)(v,x-tv+sv, s)\,\ds
$$
is bounded on $\RDRDR$ for each interval $J\subset\R$. Specifically, one has
$$
\left|\int_J\AA(\MM)(v,x-tv+sv, s)\,\ds\right|\le\nu(\MM)\quad\hbox{ for a.e. }(v,x,t)\in\RDRDR\,,
$$
where
\begin{equation}\label{DefNu}
\begin{aligned}
\nu(\MM):&=\sup_{(v,x,t)\in\RDRDR}\left|\int_\R\AA(\MM)(v,x-tv+sv, s)\,\ds\right|
\\
&\le\frac{m\BAR\bSf}{(2\pi)^{\D-\frac12}\sqrt{a}}\left((2\pi a)^{\D/2}+\frac{|\SD|\sqrt{\det Q}}{\beta+\D-1}\right)\,.
\end{aligned}
\end{equation}
\end{Lem}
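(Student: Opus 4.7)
The plan is to exploit the free-flow covariance $\MM(v_*,x,s)=\MM(v_*,x-sv_*,0)$ so that the $s$-integral reduces to a one-dimensional Gaussian integral along a straight line in $\RD$, and then to control the resulting $v_*$-integral by splitting near/far from the integrable singularity at $v_*=v$. Setting $y:=x-tv$, the definition of $\AA$ and the covariance of $\MM$ give
$$
\AA(\MM)(v,y+sv,s)=\BAR\bSf\int_\RD \MM(v_*,y+s(v-v_*),0)|v-v_*|^\beta \dv_*.
$$
Fubini together with the substitution $\sigma=s|v-v_*|$ (valid off the null set $v_*=v$) then yields
$$
\left|\int_J \AA(\MM)(v,y+sv,s)\,\ds\right|\le \BAR\bSf \int_\RD |v-v_*|^{\beta-1}\int_\R \MM(v_*,y+\sigma n,0)\,d\sigma\,\dv_*,
$$
where $n:=(v-v_*)/|v-v_*|\in\SD$ and $J$ has been extended to $\R$ by non-negativity.

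The next step is to bound the inner line integral uniformly in $(y,n)$. I would complete the square in $x$ in the quadratic form $q(v_*,x,0)$ of Theorem A: setting $\alpha:=bv_*-\tfrac12 Bv_*$ and using $v_*\cdot Bv_*=0$ to get $|\alpha|^2=b^2|v_*|^2+\tfrac14|Bv_*|^2$, one finds
$$
\MM(v_*,x,0)=\frac{m\sqrt{\det Q}}{(2\pi)^\D}\exp\!\Big(-\tfrac{a}{2}\big|x+\tfrac{\alpha}{a}\big|^2+\tfrac{|\alpha|^2}{2a}-\tfrac{c|v_*|^2}{2}\Big).
$$
Along $x=y+\sigma n$ we have $|x+\alpha/a|^2\ge(\sigma+(y+\alpha/a)\cdot n)^2$, so the one-dimensional Gaussian integral gives, uniformly in $(y,n)$,
$$
\int_\R \MM(v_*,y+\sigma n,0)\,d\sigma \le g(v_*):=\frac{m\sqrt{\det Q}}{(2\pi)^{\D-1/2}\sqrt{a}}\exp\!\Big(-\tfrac{1}{2a}\, v_*^{T} \tilde Q\, v_*\Big),
$$
with $\tilde Q:=(ac-b^2)I+\tfrac14 B^2$. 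The key elementary fact is the matrix inequality $\tilde Q\ge Q>0$ (since $B^2\le 0$ implies $\tilde Q-Q=-\tfrac34 B^2\ge 0$), which gives $\det\tilde Q\ge\det Q$.

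Finally I would split the $v_*$-integral at $|v-v_*|=1$. Since $\beta\le 1$, $|v-v_*|^{\beta-1}\le 1$ on $\{|v-v_*|>1\}$; since $\beta+\D-1>0$, $\int_{B_1(v)}|v-v_*|^{\beta-1}\dv_*=|\SD|/(\beta+\D-1)$. Combining $g(v_*)\le g(0)$ on the unit ball with the Gaussian computation
$$
\int_\RD g(v_*)\,\dv_*=\frac{g(0)(2\pi a)^{\D/2}}{\sqrt{\det\tilde Q}}\le\frac{g(0)(2\pi a)^{\D/2}}{\sqrt{\det Q}},
$$
and substituting $g(0)=m\sqrt{\det Q}/[(2\pi)^{\D-1/2}\sqrt{a}]$ along with the prefactor $\BAR\bSf$, reproduces exactly the claimed upper bound on $\nu(\MM)$.

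The hardest part is the uniform-in-$(y,n)$ bound on the anisotropic Gaussian line integral with the right combined constant: this hinges on the completion of the square to isolate the $\sigma$-dependence, and on the matrix comparison $\tilde Q\ge Q$, which is precisely what ensures that $\sqrt{\det Q}$ appears in only one of the two terms of the stated bound rather than in both.
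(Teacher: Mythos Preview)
Your proof is correct and follows essentially the same route as the paper: exploit the free-transport identity $\MM(v_*,y+sv,s)=\MM(v_*,y+s(v-v_*),0)$, complete the square in the spatial variable of $q(v_*,\cdot,0)$, bound the resulting one-dimensional Gaussian line integral uniformly in $(y,n)$, and then split the remaining $v_*$-integral at $|v-v_*|=1$. The only cosmetic difference is that, working from the Theorem~A form of $q$, your completion of the square produces the residual quadratic form $\tilde Q=(ac-b^2)I+\tfrac14 B^2$ rather than $Q=(ac-b^2)I+B^2$ as in the paper's own computation (which uses the matrix representation of $q$); your extra observation $\tilde Q\ge Q>0$, hence $\det\tilde Q\ge\det Q$, is precisely what reconciles the two and recovers the stated constant.
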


\subsection{Existence, uniqueness and stability for the Cauchy problem}\label{SS-CauchyPbm}

Our analysis of the dynamics of the Boltzmann equation in the neighborhood of global Maxwellian begins with the following existence and uniqueness result. It states the existence and uniqueness of the mild solution of
the Cauchy problem for the Boltzmann equation under the assumption that the initial distribution function $F^\init$ is close enough to the restriction at time $t=0$ of a global Maxwellian $\MM$ which is itself small enough 
when measured in terms of the parameter $\nu(\MM)$ defined in Lemma \ref{L-Nu}.

\begin{Thm}\label{T-Etern}
Assume that the collision kernel $\bSf$ has separated form with $\beta\in(1-\D,1]$. Let $\MM$ be a global Maxwellian such that $\nu(\MM)$ defined in (\ref{DefNu}) satisfies $\nu(\MM)<\tfrac14$. 

\smallskip
\noindent
(a) For each $F^\init\in\YY_{\MM(0)}$ such that 
$$
|F^\init-\MM(0)|_{\MM(0)}<\frac{(1-4\nu(\MM))^2}{8\nu(\MM)}\,,
$$
there exists a unique mild solution $F\in\XX_\MM$ of the Boltzmann equation such that 
$$
F(0)=F^\init\quad\hbox{ and }\|F-\MM\|_\MM\le r\,,
$$
with
$$
r=\left(\frac1{4\nu(\MM)}-1\right)\left(1-\sqrt{1-\frac{8\nu(\MM)|F^\init-\MM(0)|_{\MM(0)}}{(1-4\nu(\MM))^2}}\right)\,.
$$

\smallskip
\noindent
(b) Moreover, if $\tfrac12\le 4\nu(\MM)<1$, or if $0<4\nu(\MM)<\tfrac12$ and $|F^\init-\MM(0)|_{\MM(0)}\le 1-6\nu(\MM)$, then $r\le 1$ and therefore
$$
0\le(1-r)\MM(v,x,t)\le F(v,x,t)\le(1+r)\MM(v,x,t)\quad\hbox{ for a.e. }(v,x,t)\in\RDRDR\,.
$$
\end{Thm}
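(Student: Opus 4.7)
The plan is to solve the Cauchy problem by a Banach fixed-point argument in a multiplicative form of $\XX_\MM$. Since $\MM$ is itself a mild solution ($e^{t\ASf}\MM(t)=\MM(0)$ and $\BB(\MM,\MM)=0$), the ansatz $F=\MM(1+h)$ reduces the problem to solving for $h\in L^\infty(\RDRDR)$ with $\|h\|_{L^\infty}\le r$, the identification being $\|F-\MM\|_\MM=\|h\|_{L^\infty}$. By bilinearity of $\BB$,
$$
\BB(F,F)=\BB(\MM,\MM h)+\BB(\MM h,\MM)+\BB(\MM h,\MM h)\,,
$$
and the mild formulation, after division by $\MM(v,x,t)$ along the free-transport characteristic $s\mapsto(v,x-tv+sv)$ and the use of the invariance $\MM(v,x-tv+sv,s)=\MM(v,x,t)$ (immediate from the structure of global Maxwellians in Theorem A), becomes the fixed-point equation $h=\Psi(h)$ with
$$
\Psi(h)(v,x,t):=h^\init(v,x-tv)+\int_0^t\frac{\BB(\MM(1+h),\MM(1+h))}{\MM}(v,x-tv+sv,s)\,\ds\,,
$$
where $h^\init:=(F^\init-\MM(0))/\MM(0)$ and $\delta:=\|h^\init\|_{L^\infty}=|F^\init-\MM(0)|_{\MM(0)}$.

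From the bilinear pointwise estimates (\ref{BB-<})--(\ref{BB+<}) one has $|\BB(\MM f_1,\MM f_2)|\le 2\|f_1\|_{L^\infty}\|f_2\|_{L^\infty}\AA(\MM)\MM$. Applied to each of the three pieces above, this yields
$$
|\BB(\MM(1+h),\MM(1+h))|\le(4\|h\|_{L^\infty}+2\|h\|_{L^\infty}^2)\AA(\MM)\MM\,,
$$
and an analogous expansion of the difference gives a Lipschitz bound with constant $4(1+r)$ on the ball of radius $r$. Dividing by $\MM$ and integrating along the characteristic, Lemma \ref{L-Nu} controls the characteristic integral of $\AA(\MM)$ by $\nu(\MM)$ uniformly in $(v,x,t)$, producing
$$
\|\Psi(h)\|_{L^\infty}\le\delta+(4r+2r^2)\nu(\MM)\,,\qquad\|\Psi(h_1)-\Psi(h_2)\|_{L^\infty}\le 4(1+r)\nu(\MM)\|h_1-h_2\|_{L^\infty}\,.
$$
Stability of the closed ball of radius $r$ becomes the quadratic inequality $2\nu(\MM)r^2-(1-4\nu(\MM))r+\delta\le 0$. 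Its discriminant $(1-4\nu(\MM))^2-8\nu(\MM)\delta$ is positive precisely under the smallness hypothesis of (a), the smaller root is exactly the value $r$ printed in the statement, and since this root lies strictly below the vertex $\frac{1-4\nu(\MM)}{4\nu(\MM)}$ the contraction constant $4(1+r)\nu(\MM)$ is strictly less than $1$. Banach's fixed-point theorem then furnishes a unique $h$ in the ball, and $F:=\MM(1+h)$ is the mild solution sought in (a).

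For part (b), the bracketing $(1-r)\MM\le F\le(1+r)\MM$ is automatic from $\|h\|_{L^\infty}\le r$ as soon as $r\le 1$, so only $r\le 1$ must be checked. When $4\nu(\MM)\ge\tfrac12$ the vertex $\frac{1-4\nu(\MM)}{4\nu(\MM)}$ already bounds $r$ by $1$. When $4\nu(\MM)<\tfrac12$, squaring and rearranging $r\le 1$ (all sides being positive) reduces to
$$
(1-4\nu(\MM))^2-(1-8\nu(\MM))^2\ge 8\nu(\MM)\delta\,,
$$
which factors as $8\nu(\MM)(1-6\nu(\MM))\ge 8\nu(\MM)\delta$, i.e.\ $\delta\le 1-6\nu(\MM)$, the remaining hypothesis of (b).

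The main subtlety I anticipate is not the estimates themselves but the verification that the $L^\infty$ fixed point really defines a mild solution in the sense of Definition \ref{D-MildSol}, i.e.\ that $\BB(F,F)\in L^1_{loc}(\RDRDR)$ and that the Duhamel identity is satisfied between \emph{every} pair $(t_1,t_2)$, not merely for $t_1=0$; both follow routinely from Fubini together with the absolute integrability supplied by Lemma \ref{L-Nu}. That lemma is the crucial input, as it converts the pointwise bound on $\AA(\MM)$ into a bound on its characteristic integral that is uniform on all of $\RDRDR$, which is exactly what makes the contraction scheme work globally in time rather than only for short times.
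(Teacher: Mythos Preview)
Your proof is correct and takes essentially the same approach as the paper's. The paper works directly with $F$ in the closed ball $\overline{B_{\XX_\MM}(\MM,r)}$ and applies the Banach fixed-point theorem to the map $G\mapsto F^\init(v,x-tv)+\CC(G,G)$, using Lemma~\ref{L-CCpm} for the estimates; your multiplicative substitution $F=\MM(1+h)$ is merely a relabeling since $\|F-\MM\|_\MM=\|h\|_{L^\infty}$, and your bounds $(4r+2r^2)\nu(\MM)$ and $4(1+r)\nu(\MM)$, the quadratic $2\nu(\MM)r^2-(1-4\nu(\MM))r+\delta\le 0$, and the case analysis for (b) all coincide exactly with the paper's.
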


Theorem \ref{T-Etern} extends earlier works, especially those of Illner and Shinbrot \cite{IllnerShin} and Hamdache \cite{Hamda}.Ê Our proof is based on the same type of fixed-point argument that was used in \cite{Hamda}, rather than
the Kaniel-Shinbrot iteration method \cite{KanielShin}  that was used in \cite{IllnerShin}. Ê(See section 6 of \cite{KanielShin} for citations of earlier uses of the fixed-point argument.) Ê We also refer to more recent papers by Toscani 
\cite{toscani1}, by Goudon \cite{goudon}, and by Alonso and Gamba \cite{Al-Ga-09}, all of which use Kaniel-Shinbrot iteration to construct solutions near global Maxwellians for the case of soft potentials.Ê Unlike Theorem \ref{T-Etern}, 
these later references do not require a smallness condition (like $\nu(\MM) < \tfrac14$) on the reference Maxwellian.Ê Otherwise, Theorem \ref{T-Etern} considers a class of collision kernels larger than that in \cite{Al-Ga-09, goudon, 
IllnerShin, toscani1} and the largest possible class of global Maxwellians, including those with rotation, whereas \cite{Al-Ga-09, goudon,IllnerShin, toscani1} consider only global Maxwellians without rotation.Ê In \cite{BGGL} we use 
Kaniel-Shinbrot iteration to extend many of the results in this paper to solutions near global Maxwellians that do not satisfy any smallness condition.

Another difference with \cite{Hamda} is statement (b), which gives a sufficient condition for the positivity of the solution so obtained. The Boltzmann equation governs the evolution of distribution functions for gas molecules. 
Therefore, solutions of the Boltzmann equation which are negative on sets of positive measure are not physically admissible.

Henceforth, the solution $F$ of the Cauchy problem with initial condition $F(0)=F^\init$ obtained in Theorem \ref{T-Etern} will be denoted 
$$
F(t)=S_tF^\init\,,\qquad t\in\R\,.
$$
In other words, $S_t$ is the one-parameter group generated by the Boltzmann equation. Notice that, under the condition on $F^\init$ in Theorem \ref{T-Etern}, the solution $S_tF^\init$ is defined for all values of the time 
variable $t$, negative as well as positive. Such solutions are referred to as ``eternal solutions''.

Although the mathematical results obtained thoughout this paper hold for positive as well as negative times, the Cauchy problem for the Boltzmann equation for positive times is of course of greater physical interest than 
its analogue for negative times. The fact that the solutions of the Cauchy problem for the Boltzmann equation obtained in Theorem \ref{T-Etern} can be extended to all negative times is a mathematical property of the
physical regime corresponding to the assumptions of Theorem \ref{T-Etern}. This observation applies to all the statements in sections \ref{SS-CauchyPbm} and \ref{SS-LongTime}, and we shall return to it later.

\smallskip
The next theorem establishes the continuous dependence of the solution $F$ of the Cauchy problem for the Boltzmann equation in terms of the initial data $F^\init$. More precisely, we show that the one-parameter group
$S_t$ is locally Lipschitz continuous on the neighborhood of $\MM(0)$ where it is defined.

\begin{Thm}\label{T-Stab}
Assume that the collision kernel $\bSf$ has separated form with $\beta\in(1-\D,1]$. Let $\MM$ be a global Maxwellian. Assume that $\nu(\MM)$ defined in (\ref{DefNu}) satisfies the condition $\nu(\MM)<\tfrac14$. Let
$F_1^\init$ and $F_2^\init\in\YY_{\MM(0)}$ be such that
$$
\eps:=\max(|F_1^\init-\MM(0)|_{\MM(0)},|F_2^\init-\MM(0)|_{\MM(0)})<\frac{(1-4\nu(\MM))^2}{8\nu(\MM)}\,.
$$
Let $F_1(t)=S_tF_1^\init$ and $F_2(t)=S_tF_2^\init$ for all $t\in\R$. Then
$$
\|F_1-F_2\|_{\MM}\le\frac{|F_1^\init-F_2^\init|_{\MM(0)}}{\sqrt{(1-4\nu(\MM))^2-8\nu(\MM)\eps}}\,.
$$
\end{Thm}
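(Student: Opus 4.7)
The plan is to use the mild formulation of the Boltzmann equation for both $F_1$ and $F_2$, subtract to obtain an integral equation for $G:=F_1-F_2$, and close an affine inequality for $\|G\|_\MM$ using Lemma \ref{L-Nu}. The final bound then reduces to an algebraic identity linking the parameter $r$ from Theorem \ref{T-Etern} to $\eps$.

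First, I would apply the mild formulation of Definition \ref{D-MildSol} to $F_1$ and $F_2$, subtract, and use the bilinearity identity $\BB(F_1,F_1)-\BB(F_2,F_2)=\BB(F_1,G)+\BB(G,F_2)$ to get
$$
G(v,y,t)=G^\init(v,y-tv)+\int_0^t\bigl[\BB(F_1,G)+\BB(G,F_2)\bigr](v,y-(t-s)v,s)\,\ds
$$
for a.e.\ $(v,y,t)\in\RDRDR$, where $G^\init:=F_1^\init-F_2^\init$. Next I would divide by $\MM(v,y,t)$ and exploit the key property that any global Maxwellian is constant along free-transport characteristics, namely $\MM(v,y-(t-s)v,s)=\MM(v,y-tv,0)=\MM(v,y,t)$. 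Combined with the pointwise bounds \eqref{BB-<}--\eqref{BB+<} and the a priori estimate $\|F_i\|_\MM\le 1+\|F_i-\MM\|_\MM\le 1+r$ supplied by Theorem \ref{T-Etern} (applied with the common value $r:=r(\eps)$, which majorizes both $\|F_i-\MM\|_\MM$ by monotonicity of the formula defining $r$), this yields
$$
\frac{|G(v,y,t)|}{\MM(v,y,t)}\le|G^\init|_{\MM(0)}+4(1+r)\|G\|_\MM\int_0^t\AA(\MM)(v,(y-tv)+sv,s)\,\ds.
$$

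Applying Lemma \ref{L-Nu} with $J=[0,t]$ (or $[t,0]$ if $t<0$) to bound the remaining integral by $\nu(\MM)$ uniformly in $(v,y,t)$, and then taking the essential supremum on the left, I obtain the self-referential inequality
$$
\|G\|_\MM\le|G^\init|_{\MM(0)}+4(1+r)\nu(\MM)\|G\|_\MM.
$$

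The main obstacle is then the algebraic identification $1-4(1+r)\nu(\MM)=\sqrt{(1-4\nu(\MM))^2-8\nu(\MM)\eps}$, on which the sharp form of the theorem rests. Writing $\nu:=\nu(\MM)$ and unwinding the explicit formula for $r$ in Theorem \ref{T-Etern} gives $\sqrt{1-8\nu\eps/(1-4\nu)^2}=1-4\nu r/(1-4\nu)$; squaring and multiplying through by $(1-4\nu)^2$ produces the clean identity $(1-4\nu)^2-8\nu\eps=(1-4\nu-4\nu r)^2$. The assumption $\eps<(1-4\nu)^2/(8\nu)$ forces $r<1/(4\nu)-1$, so $1-4(1+r)\nu>0$ and one may take the positive square root. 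Inserting this into the self-referential inequality and dividing by $1-4(1+r)\nu$ produces precisely the claimed stability bound.
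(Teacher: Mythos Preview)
Your proof is correct and follows essentially the same approach as the paper: subtract the two mild formulations, bound the collision difference via \eqref{BB-<}--\eqref{BB+<} and Lemma~\ref{L-Nu}, and close the resulting affine inequality in $\|F_1-F_2\|_\MM$. The paper packages the collision-integral estimate by invoking Lemma~\ref{L-CCpm} directly (which gives $\|\CC(F_1,F_1)-\CC(F_2,F_2)\|_\MM\le 2\nu(\MM)\|F_1+F_2\|_\MM\|F_1-F_2\|_\MM$), whereas you unpack that lemma inline via the bilinear splitting $\BB(F_1,F_1)-\BB(F_2,F_2)=\BB(F_1,G)+\BB(G,F_2)$; the resulting constant $4\nu(\MM)(1+r)$ is identical. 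Your explicit verification of the algebraic identity $1-4\nu(1+r)=\sqrt{(1-4\nu)^2-8\nu\eps}$ is a welcome addition, since the paper merely asserts that substituting the expression for $r$ ``leads to the inequality of Theorem~\ref{T-Stab}.''
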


\medskip
In the case of cutoff collision kernels corresponding to soft potentials, one has the following more general stability and uniqueness result.

\begin{Thm}\label{T-StabSoft}
Assume that the collision kernel $\bSf$ has separated form with $\beta\in(1-\D,0]$. Let $F_1$ and $F_2$ be two mild solutions of the Boltzmann equation satisfying the bound
$$
|F_j(v,x,t)|\le\MM(v,x,t)\,,\quad\hbox{ for a.e. }(v,x,t)\in\RDRDR\hbox{ and }j=1,2\,.
$$
Then
$$
\|F_1-F_2\|_\MM\le|F_1(0)-F_2(0)|_{\MM(0)}e^{4\mu(\MM)}\,,
$$
where $\mu(\MM)$ is the constant defined in (\ref{DefMu}).
\end{Thm}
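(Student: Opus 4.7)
The plan is to run a Gronwall argument on the time-slice quantity
$$\gamma(t) := \|(F_1 - F_2)(\cdot,\cdot,t)/\MM(\cdot,\cdot,t)\|_{L^\infty(\RDRD)},$$
exploiting the bilinear structure of $\BB$ together with the transport-invariance identity $\MM(v,x,t) = \MM(v, x-tv, 0)$ for global Maxwellians, which implies that $e^{s\ASf}\MM(v,x,s) = \MM(v,x+sv,s) = \MM(v,x,0)$ is independent of $s$.

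Setting $G := F_1 - F_2$, bilinearity gives $\BB(F_1,F_1) - \BB(F_2,F_2) = \BB(F_1, G) + \BB(G, F_2)$, so subtracting the mild formulations for $F_1$ and $F_2$ yields
$$e^{t\ASf}G(v,x,t) = G(v,x,0) + \int_0^t e^{s\ASf}\bigl(\BB(F_1,G) + \BB(G,F_2)\bigr)(v,x,s)\,\ds.$$
The argument proving (\ref{BB-<})--(\ref{BB+<}) yields a time-slice refinement: for each fixed $s$,
$$|\BB_\pm(F,H)(v,x,s)| \le \|F(s)/\MM(s)\|_\infty\, \|H(s)/\MM(s)\|_\infty\, \AA(\MM)(v,x,s)\, \MM(v,x,s).$$
Using the hypothesis $\|F_j(s)/\MM(s)\|_\infty \le 1$ for $j=1,2$ gives the pointwise bound
$$|\BB(F_1,G) + \BB(G,F_2)|(v,x,s) \le 4\gamma(s)\, \AA(\MM)(v,x,s)\, \MM(v,x,s).$$

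Dividing the mild formulation by $\MM(v,x,0)$ and invoking the transport-invariance of $\MM$, the left-hand side becomes $G(v,x+tv,t)/\MM(v,x+tv,t)$ and the integrand becomes $\gamma(s)\AA(\MM)(v,x+sv,s)$ times a constant factor. Taking the $L^\infty$ sup in $(v,x)$ --- via the bijective change of variable $y = x+tv$ on the left, and the identity $\sup_{(v,x)}\AA(\MM)(v,x+sv,s) = \|\AA(\MM)(s)\|_{L^\infty(\RDRD)}$ on the right --- produces
$$\gamma(t) \le \gamma(0) + 4\int_0^t \gamma(s)\, \|\AA(\MM)(s)\|_{L^\infty(\RDRD)}\,\ds.$$
Gronwall's inequality combined with Lemma \ref{L-Mu} (whose integrability conclusion is exactly what forces the restriction $\beta \in (1-\D, 0]$) then yields $\gamma(t) \le \gamma(0)\, e^{4\mu(\MM)}$; running the same argument backwards handles $t<0$, and taking sup over $t\in\R$ gives the claim.

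The main obstacle is a minor measurability/regularity point: to apply Gronwall one needs $\gamma$ to be measurable and locally finite, which is not completely automatic from the $L^\infty_{v,x}$ sup definition. This is handled by working with essential suprema and using the mild-solution hypothesis together with the global bound $|F_j| \le \MM$ (which guarantees $\gamma(t) \le 2$ uniformly); alternatively, one can first truncate to compact $t$-intervals. Once the bilinear splitting and the transport invariance of $\MM$ are in place, everything reduces to this essentially standard Gronwall estimate.
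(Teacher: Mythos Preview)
Your proof is correct and follows essentially the same route as the paper: subtract the mild formulations, use a bilinear splitting of $\BB(F_1,F_1)-\BB(F_2,F_2)$ together with the pointwise bounds (\ref{BB-<})--(\ref{BB+<}) and the transport invariance $e^{s\ASf}\MM(s)=\MM(0)$ to reduce to an integral inequality for $\gamma(t)=|F_1(t)-F_2(t)|_{\MM(t)}$, then apply Gronwall and Lemma \ref{L-Mu}. The only cosmetic difference is that the paper writes the splitting as $\tfrac12\BB(F_1+F_2,F_1-F_2)+\tfrac12\BB(F_1-F_2,F_1+F_2)$ rather than your $\BB(F_1,G)+\BB(G,F_2)$, but both yield the same constant $4$.
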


\smallskip
Uniqueness is a direct consequence of the last inequality: if $F_1(0)=F_2(0)$ a.e. on $\RDRD$, then $F_1=F_2$ a.e. on $\RDRDR$. The constant $\mu(\MM)$ is not optimal in the bound above, and can be replaced with
$$
\max\left(\int_0^\infty\|\AA(\MM)(t)\|_{L^\infty(\RDRD)}\dt,\int_{-\infty}^0\|\AA(\MM)(t)\|_{L^\infty(\RDRD)}\dt\right)\,,
$$
as can be seen from the proof.

\subsection{Large time behavior}\label{SS-LongTime}

In this section, we pursue our analysis of the dynamics of the Boltzmann equation near global Maxwellian with a detailed discussion of the asymptotic behavior of $S_tF^\init$ for $t\to\pm\infty$. Recall thatÊ $\ASf$Ê denotes the advection 
operator, i.e. $\ASf\phi=v\DOT\GRAD\phi$, which is the infinitesimal generator of the one-parameter group $e^{t\ASf}$ defined by the formula $e^{t\ASf}\phi(x,v)=\phi(x+tv,v)$.

Our first result in this direction is the following simple but general observation.

\begin{Thm}\label{T-ExistLim}
Assume that the collision kernel $\bSf$ has separated form with $\beta\in(1-\D,2]$. Let $F\equiv F(v,x,t)$ be a mild solution of the Boltzmann equation defined a.e. on $\RDRD\CROSS(t_0,+\infty)$ --- resp. $\RDRD\CROSS(-\infty,t_0)$)
--- for some $t_0\in\R$. Assume that, for some global Maxwellian $\MM$, defined in terms of $m>0$ and $(a,b,c,B)\in\Omega$ as in Theorem A, the solution $F$ satisfies
$$
|F(v,x,t)|\le\MM(v,x,t)
$$
for a.e. $(v,x,t)\in\RDRD\CROSS(t_0,+\infty)$ --- resp. $\RDRD\CROSS(-\infty,t_0)$. Then there exists a unique $F^{+\infty}\equiv F^{+\infty}(v,x)$ --- resp. $F^{-\infty}\equiv F^{-\infty}(v,x)$ --- such that
$$
\|F(t)-e^{-t\ASf}F^{+\infty}\|_{L^1(\RDRD)}\to 0
$$
as $t\to+\infty$ --- resp.
$$
\|F(t)-e^{-t\ASf}F^{-\infty}\|_{L^1(\RDRD)}\to 0
$$
as $t\to-\infty$. 

The functions $F^{\pm\infty}$ are given by
$$
\begin{aligned}
F^{+\infty}=F^\init+\int_0^\infty e^{s\ASf}\BB(F,F)(s)\,\ds\,, 
\\
F^{-\infty}=F^\init-\int_{-\infty}^0e^{s\ASf}\BB(F,F)(s)\,\ds\,,
\end{aligned}
$$
and satisfy the bound
$$
|F^{\pm\infty}(v,x)|\le\MM(v,x,0)\quad\hbox{ for a.e. }(v,x)\in\RDRD\,.
$$
\end{Thm}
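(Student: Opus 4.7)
The plan is to introduce $G(t):=e^{t\ASf}F(\cdot,\cdot,t)$ and show that $G$ has a limit $F^{+\infty}$ in $L^1(\RDRD)$ as $t\to+\infty$ (and $F^{-\infty}$ as $t\to-\infty$). Because the map $(v,x)\mapsto(v,x+tv)$ is measure-preserving on $\RDRD$, $e^{t\ASf}$ is an $L^1$-isometry, so $L^1$-convergence of $G(t)$ to $F^{+\infty}$ is equivalent to the desired convergence $\|F(t)-e^{-t\ASf}F^{+\infty}\|_{L^1(\RDRD)}\to 0$. Applying $e^{s\ASf}$ to the mild formulation of Definition \ref{D-MildSol},
\[
G(t_2)-G(t_1)=\int_{t_1}^{t_2}e^{s\ASf}\BB(F,F)(\cdot,\cdot,s)\,\ds,
\]
so convergence of $G$ and the explicit formulas for $F^{\pm\infty}$ will follow at once provided $s\mapsto e^{s\ASf}\BB(F,F)(s)$ is Bochner integrable from $(0,\infty)$ (resp.\ $(-\infty,0)$) into $L^1(\RDRD)$.

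\textbf{Key integrability estimate.} Since $e^{s\ASf}$ is an $L^1$-isometry, it suffices to bound $\int_0^\infty\|\BB(F,F)(s)\|_{L^1(\RDRD)}\,\ds$. The hypothesis $|F|\le\MM$ together with the estimates (\ref{BB-<})--(\ref{BB+<}) applied with $F=G$ and $\|F\|_\MM\le 1$ yields
\[
|\BB(F,F)(v,x,s)|\le 2\AA(\MM)(v,x,s)\MM(v,x,s)\quad\hbox{a.e.}
\]
Because $\MM$ is a global Maxwellian, $\MM(v,x,s)=\MM(v,x-sv,0)$. The measure-preserving change of variable $y=x-sv$ at fixed $v$, combined with Tonelli, then gives
\[
\int_0^\infty\iint_{\RDRD}\AA(\MM)(v,x,s)\MM(v,x,s)\,\dv\dx\,\ds=\iint_{\RDRD}\MM(v,y,0)\int_0^\infty\AA(\MM)(v,y+sv,s)\,\ds\,\dv\dee y.
\]
By Lemma \ref{L-Nu} (applied with $t=0$ and $x$ replaced by $y$), the inner time integral is bounded a.e.\ by $\nu(\MM)$, so the total is at most $2\nu(\MM)\|\MM(\cdot,\cdot,0)\|_{L^1(\RDRD)}=2m\nu(\MM)<\infty$. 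The analogous bound holds on $(-\infty,0)$, so $G$ is Cauchy in $L^1(\RDRD)$ at $\pm\infty$ and the stated formulas hold.

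\textbf{Pointwise bound on the limits.} Because $\MM$ itself is transported by the free flow, $e^{t\ASf}\MM(t)=\MM(0)$ as a function of $(v,x)$, independent of $t$. Hence $|G(t)|\le e^{t\ASf}\MM(t)=\MM(0)$ pointwise a.e.\ for every admissible $t$. Extracting an a.e.\ convergent subsequence from the $L^1$ convergence of $G(t)$ to $F^{\pm\infty}$ yields $|F^{\pm\infty}(v,x)|\le\MM(v,x,0)$ a.e., as claimed. Uniqueness of $F^{\pm\infty}$ is immediate from uniqueness of the $L^1$ limit.

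\textbf{Main obstacle.} Everything hinges on the dispersive estimate of Lemma \ref{L-Nu}, which converts the decay of $\AA(\MM)$ along free-transport trajectories (a consequence of the dispersion built into any global Maxwellian of finite mass) into global-in-time integrability of $\|\BB(F,F)(s)\|_{L^1}$. Once this input is available, the remainder of the argument is bookkeeping resting on the fact that $e^{t\ASf}$ is an $L^1$-isometry and that $\MM$ is invariant under the free flow.
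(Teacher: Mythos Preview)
Your argument is correct for $\beta\in(1-\D,1]$, but the theorem is stated for the wider range $\beta\in(1-\D,2]$, and your appeal to Lemma~\ref{L-Nu} fails precisely on the super-hard interval $\beta\in(1,2]$. Lemma~\ref{L-Nu} carries the explicit hypothesis $\beta\le 1$; inspecting its proof, the final step bounds $\int_{\RD}|v-v_*|^{\beta-1}e^{-(Qv_*|v_*)/2a}\,\dv_*$ uniformly in $v$, and for $\beta>1$ this integral grows like $|v|^{\beta-1}$ as $|v|\to\infty$, so $\nu(\MM)=+\infty$. Thus your key integrability estimate is not available in the full generality claimed by the theorem.

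The paper closes this gap by bypassing the pointwise dispersive bound entirely and computing the $L^1(\RDRDR)$ norm of $\AA(\MM)\MM$ directly from the explicit formulas (\ref{rho-u=})--(\ref{theta=}): one finds
\[
\iint_{\RDRD}\AA(\MM)\MM\,\dv\dx=C_\aSf\,\theta(t)^{\beta/2}\int_{\RD}\rho(x,t)^2\,\dx
=C_\aSf\,m^2\sqrt{\det(\tfrac{Q}{4\pi})}\,\theta(t)^{(\D+\beta)/2},
\]
with $C_\aSf=\iint|w-w_*|^\beta M[1,0,1](w)M[1,0,1](w_*)\,\dee w\dee w_*<\infty$ for every $\beta\in(-\D,\infty)$. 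Since $\theta(t)=O(t^{-2})$ by (\ref{theta=}), the time integral converges whenever $\tfrac{\D+\beta}{2}>\tfrac12$, i.e.\ for all $\beta>1-\D$, covering the full range. Once $\int\|\BB(F,F)(s)\|_{L^1}\,\ds<\infty$ is established this way, the remainder of your argument (the mild formulation, the $L^1$-isometry of $e^{t\ASf}$, the pointwise bound on $F^{\pm\infty}$ via $e^{t\ASf}\MM(t)=\MM(0)$) goes through unchanged and matches the paper.

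For $\beta\in(1-\D,1]$ your route via Lemma~\ref{L-Nu} is a legitimate alternative: it trades the explicit moment computation for the dispersive estimate, and even yields the sharper constant $2m\nu(\MM)$ in place of the paper's triple integral. But to prove the theorem as stated you must either restrict $\beta$ or switch to the direct computation.
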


This theorem obviously applies to the solution $F(t)=S_tF^\init$ obtained in Theorem \ref{T-Etern}, since it satisfies the bound $-(1+r)\MM(t)\le(1-r)\MM(t)\le S_tF^\init\le(1+r)\MM(t)$ for all $t\in\R$. The asymptotic states $F^{\pm\infty}$ 
so obtained obviously satisfy the bounds
$$
(1-r)\MM(v,x,0)\le F^{\pm\infty}(v,x)\le(1+r)\MM(v,x,0)\quad\hbox{ for a.e. }(v,x)\in\RDRD\,,
$$
since $e^{t\ASf}\MM(t)=\MM(0)$ and
$$
(1-r)\MM(v,x,t)\le S_tF^\init(v,x)\le(1+r)\MM(v,x,t)\quad\hbox{ for a.e. }(v,x,t)\in\RDRDR\,.
$$

\begin{Def}\label{D-TTpm}
Let $\MM$ be a global Maxwellian. Let $F^\init$ and $F^{+\infty}$ (resp. $F^{-\infty}$) be two elements of $\YY_{\MM(0)}$. We say that $F^{+\infty}=\TT^+F^\init$ (resp. $F^{-\infty}=\TT^-F^\init$) if there exists a unique mild solution $F$ 
of the Boltzmann equation on $\RDRD\times[0,+\infty)$ (resp. on $\RDRD\times(-\infty,0]$) such that $\|F(t)-e^{-t\ASf}F^{+\infty}\|_{L^1(\RDRD)}\to 0$ as $t\to+\infty$ (resp. $\|F(t)-e^{-t\ASf}F^{-\infty}\|_{L^1(\RDRD)}\to 0$ as $t\to-\infty$).
\end{Def}

This defines two operators $\TT^+$ and $\TT^-$ on subsets of $\YY_{\MM(0)}$. In view of Theorems \ref{T-Etern} and \ref{T-ExistLim} and of the remarks before the definition above, the operators $\TT^\pm$ are defined on 
$B_{\YY_{\MM(0)}}\left(\MM(0),\frac{(1-\nu(\MM))^2}{8\nu(\MM)}\right)$ provided that $\nu(\MM)<\tfrac14$, and satisfy\footnote{If $E$ is a Banach space, $B_E(x,r)$ --- resp. $\overline{B_E(x,r)}$ --- designates the open ball --- 
resp. the closed ball --- centered at $x$ with radius $r$ in $E$.} 
$$
\TT^\pm\left(B_{\YY_{\MM(0)}}\left(\MM(0),\tfrac{(1-4\nu(\MM))^2}{8\nu(\MM)}\right)\right)\subset B_{\YY_{\MM(0)}}\left(\MM(0),\tfrac1{4\nu(\MM)}-1\right)\,.
$$

Notice that the existence of the operators $\TT^\pm$ on balls of a slightly more general class of spaces analogous to $\YY_{\MM(0)}$ had been established by Hamdache \cite{Hamda}. The more general existence theorem 
above (Theorem \ref{T-ExistLim}) is new. See also formula (16.16) in \cite{Tartar} and Theorem 5.4 in \cite{Bony} for an analogous result on discrete velocity models of the kinetic theory of gases. 

Although the asymptotic behavior of solutions of the Boltzmann equation over $\RD$ for large positive time is of greater physical interest than the large negative time limit, the fact that both limits are obtained by exactly the
same mathematical arguments is an important clue.

We know that when the Boltzmann equation is set on a bounded spatial domain, assuming for instance that $x$ belongs to some periodic box, or to some bounded, connected open set of $\RD$ with smooth boundary and appropriate
boundary conditions (such as specular reflection of the gas molecules at the boundary), its solution converges, as $t\to+\infty$,  to the uniform Maxwellian state that is compatible with the initial and boundary conditions, as well as with 
the fundamental conservation laws implied by the Boltzmann equation itself (see for instance \cite{DesviVilla}). In particular, different initial data $F^\init$ may, in the case of a bounded spatial domain, lead to the same Maxwellian state 
in the long time limit.

By analogy, one might think that the asymptotic behavior for $t\to+\infty$ of any mild solution $F$ of the Boltzmann equation over $\RD$, satisfying appropriate decay conditions more stringent than (\ref{DecayL1w}) as 
$|x|+|v|\to\infty$, is given by the global Maxwellian $\MM$ with the same globally conserved quantities as $F$ --- i.e. by the state of maximal entropy compatible with the same conserved quantities as $F$. However, this 
seems unlikely, since the mathematically analogous asymptotic behavior for $t\to-\infty$ is not expected to involve the entropy. 

In fact the dynamics defined by the Boltzmann equation set in the Euclidean space $\RD$ in the long time limit is completely different from the case of a bounded domain, as shown by the next theorem.

\begin{Thm}\label{T-121}
Assume that the collision kernel $\bSf$ has separated form with $\beta\in(1-\D,0]$. Let $F_1\equiv F_1(v,x,t)$ and $F_2\equiv F_2(v,x,t)$ be mild solutions of the Boltzmann equation defined a.e. on $\RDRD\CROSS(t_0,+\infty)$ 
--- resp. $\RDRD\CROSS(-\infty,t_0)$ --- satisfying the bounds
$$
|F_1(v,x,t)|\le\MM(v,x,t)\quad\hbox{ and }\quad|F_2(v,x,t)|\le\MM(v,x,t)
$$
for a.e. $(v,x,t)\in\RDRD\CROSS(t_0,+\infty)$ --- resp. $\RDRD\CROSS(-\infty,t_0)$ --- where $\MM$ is a global Maxwellian, defined in terms of $m>0$ and $(a,b,c,B)\in\Omega$ as in Theorem A. 

Let $F_j^{\pm\infty}\equiv F_j^{\pm\infty}(v,x)$ be such that
$$
\|F_j(t)-e^{-(t-t_0)\ASf}F_j^{+\infty}\|_{L^1(\RDRD)}\to 0\quad\hbox{ for }j=1,2
$$
as $t\to+\infty$ --- resp. 
$$
\|F_j(t)-e^{-(t-t_0)\ASf}F_j^{-\infty}\|_{L^1(\RDRD)}\to 0\quad\hbox{ for }j=1,2
$$
as $t\to-\infty$. Then
$$
|F_1(t)-F_2(t)|_{\MM(t)}\le|F_1^{+\infty}-F_2^{+\infty}|_{\MM(t_0)}e^{4\mu(\MM)}
$$
for all $t>t_0$ --- resp.
$$
|F_1(t)-F_2(t)|_{\MM(t)}\le|F_1^{-\infty}-F_2^{-\infty}|_{\MM(t_0)}e^{4\mu(\MM)}
$$
for all $t<t_0$. In particular, if $F_1^{+\infty}=F_2^{+\infty}$ (resp. $F_1^{-\infty}=F_2^{-\infty}$) a.e. on $\RDRD$, then
$$
F_1(v,x,t)=F_2(v,x,t)\hbox{ for a.e. }(v,x)\in\RDRD
$$
for all $t>t_0$ --- resp. $t<t_0$.
\end{Thm}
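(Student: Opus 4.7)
The approach is to recast the mild-solution identity of Definition \ref{D-MildSol} with its upper endpoint sent to $+\infty$, using the $L^1$ convergence hypothesis, and then convert the resulting tail integral equation into a backward Gronwall estimate for $\phi(t):=|F_1(t)-F_2(t)|_{\MM(t)}$. The case $t<t_0$ is handled by the mirror argument with the tail taken over $(-\infty,t)$.

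First I would produce the tail representation. Starting from the mild-solution identity between $t_0$ and $t$, the bound $|F_j|\le\MM$ combined with (\ref{BB-<})--(\ref{BB+<}) yields the pointwise estimate $|\BB(F_j,F_j)|\le 2\AA(\MM)\MM$, and the transport invariance $e^{s\ASf}\MM(s)=\MM(0)$ dominates $|e^{s\ASf}\BB(F_j,F_j)(s)|$ by $2\|\AA(\MM)(s)\|_{L^\infty(\RDRD)}\MM(0)$, whose $s$-integral on $[t_0,+\infty)$ is at most $2\mu(\MM)\MM(0)$ by Lemma \ref{L-Mu}. Dominated convergence and the hypothesis $\|F_j(t)-e^{-(t-t_0)\ASf}F_j^{+\infty}\|_{L^1}\to 0$ then deliver
$$
e^{t\ASf}F_j(t) = e^{t_0\ASf}F_j^{+\infty} - \int_{t}^{+\infty}e^{s\ASf}\BB(F_j,F_j)(s)\,\ds\,.
$$

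Subtracting the identities for $j=1,2$, I would apply the polarization $\BB(F_1,F_1)-\BB(F_2,F_2) = \BB(F_1+F_2,\,F_1-F_2)$, understood via the symmetric bilinear extension of $\BB$. The pointwise-in-$s$ bounds $|F_1+F_2|(v,x,s)\le 2\MM(v,x,s)$ and $|F_1-F_2|(v,x,s)\le\phi(s)\MM(v,x,s)$, inserted into the arguments of (\ref{BB-<})--(\ref{BB+<}) at each fixed time, yield
$$
|\BB(F_1+F_2,F_1-F_2)(v,x,s)|\le 4\phi(s)\AA(\MM)(v,x,s)\MM(v,x,s)\,.
$$
Dividing the difference identity by $\MM(0)$, using $\MM(0)(v,x)=\MM(v,x+sv,s)$ to convert weighted sup norms via the change of variables $y=x+sv$ (which in particular yields $\|e^{t_0\ASf}(F_1^{+\infty}-F_2^{+\infty})/\MM(0)\|_{L^\infty(\RDRD)}=|F_1^{+\infty}-F_2^{+\infty}|_{\MM(t_0)}$), and taking the sup in $(v,x)$, I would arrive at the backward integral inequality
$$
\phi(t)\le|F_1^{+\infty}-F_2^{+\infty}|_{\MM(t_0)}+4\int_{t}^{+\infty}\phi(s)\|\AA(\MM)(s)\|_{L^\infty(\RDRD)}\,\ds\,.
$$
A backward Gronwall argument together with $\int_t^{+\infty}\|\AA(\MM)(s)\|_{L^\infty(\RDRD)}\,\ds\le\mu(\MM)$ then gives $\phi(t)\le|F_1^{+\infty}-F_2^{+\infty}|_{\MM(t_0)}\,e^{4\mu(\MM)}$, from which the uniqueness statement when $F_1^{+\infty}=F_2^{+\infty}$ follows immediately.

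I expect the main technical obstacle to be the rigorous passage to the limit as $t\to+\infty$ in the mild-solution identity: one must reconcile the $L^1$ convergence hypothesis on $F_j(t)$ with the absolute convergence of the tail integral in the $\MM$-weighted supremum norm. The constraint $\beta\in(1-\D,0]$ is exactly what is needed to invoke Lemma \ref{L-Mu} and guarantee $\mu(\MM)<\infty$; once the tail representation is secured, the remainder of the argument mirrors the forward Gronwall scheme used in the proof of Theorem \ref{T-StabSoft}, applied on a tail interval rather than an initial interval.
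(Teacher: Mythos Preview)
Your proposal is correct and follows essentially the same route as the paper: pass to the limit $t'\to+\infty$ in the mild-solution identity to obtain the tail representation, bound the difference of collision integrals by $4\phi(s)\AA(\MM)(s)\MM(s)$ via (\ref{BB-<})--(\ref{BB+<}) and the polarization identity, and close with the backward Gronwall inequality (Lemma \ref{L-Gronw}) and Lemma \ref{L-Mu}. Your explicit remark on reconciling the $L^1$ convergence of $e^{(t-t_0)\ASf}F_j(t)$ with the absolute convergence of the tail integral in the $\MM$-weighted sup norm is exactly the point the paper leaves implicit.
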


In other words, the operators $\TT^\pm$ are one-to-one on their domains of definition in the case of soft molecular interactions: unlike in the case of the Boltzmann equation set in a bounded domain, different initial data lead 
to different asymptotic states $F^{\pm\infty}$.

Whether these operators are onto is another natural question, which is partially answered by the next theorem. 

\begin{Thm}\label{T-TTpm}
Assume that $\bSf$ has separated form with $\beta\in(1-\D,1]$. Let $\MM$ be a global Maxwellian, with $\nu(\MM)$ defined in (\ref{DefNu}) such that $\nu(\MM)<\tfrac14$. Let $F^{\pm\infty}$ satisfy
$$
F^{\pm\infty}\in B_{\YY_{\MM(0)}}\left(\MM(0),\tfrac{(1-4\nu(\MM))^2}{8\nu(\MM)}\right)\,.
$$
Then there exists a unique $F^\init_\pm\in\YY_{\MM(0)}$ satisfying $F^\init_\pm\in\overline{B_{\YY_{\MM(0)}}(\MM(0),r)}$ with
$$
r=\left(\frac1{4\nu(\MM)}-1\right)\left(1-\sqrt{1-\frac{8\nu(\MM)|F^{\pm\infty}-\MM(0)|_{\MM(0)}}{(1-4\nu(\MM))^2}}\right)\,,
$$
and
$$
\TT^\pm F^\init_\pm=F^{\pm\infty}\,.
$$
\end{Thm}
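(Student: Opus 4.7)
The strategy is to mimic the Banach fixed-point argument behind Theorem \ref{T-Etern}, reparametrized so that the asymptotic state, rather than the initial datum, plays the role of the given data. I focus on $\TT^+$; the case of $\TT^-$ is identical after the substitution $t \mapsto -t$. Any mild solution $F \in \XX_\MM$ scattering to $F^{+\infty}$ in the sense of Theorem \ref{T-ExistLim} must satisfy the integral equation
\begin{equation}\label{TTPmFP}
F(v,x,t) = e^{-t\ASf}F^{+\infty}(v,x) - \int_t^\infty e^{(s-t)\ASf}\BB(F,F)(v,x,s)\,ds,
\end{equation}
obtained by passing to the limit $t_2 \to +\infty$ in the mild formulation of Definition \ref{D-MildSol}. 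Writing $F = \MM + G$ and using $\BB(\MM,\MM)=0$ to expand $\BB(\MM+G,\MM+G) = \BB(\MM,G)+\BB(G,\MM)+\BB(G,G)$, I would define
\[
\Psi(G)(t) := e^{-t\ASf}(F^{+\infty}-\MM(0)) - \int_t^\infty e^{(s-t)\ASf}\bigl[\BB(\MM,G)+\BB(G,\MM)+\BB(G,G)\bigr](s)\,ds
\]
on $\XX_\MM$, and seek a fixed point $G$ in a closed ball centered at $0$.

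The main estimate is exactly the one that drives the forward Cauchy argument of Theorem \ref{T-Etern}: any global Maxwellian is invariant along the free-transport flow, $\MM(v,x+(s-t)v,s)=\MM(v,x,t)$, so combining the pointwise bounds (\ref{BB-<})--(\ref{BB+<}) with Lemma \ref{L-Nu} yields
\[
\left|\int_t^\infty e^{(s-t)\ASf}\BB(F_1,F_2)(v,x,s)\,ds\right| \le 2\|F_1\|_\MM\|F_2\|_\MM\,\nu(\MM)\,\MM(v,x,t)
\]
for all $F_1,F_2 \in \XX_\MM$ and all $t \in \R$. This gives $\|\Psi(G)\|_\MM \le |F^{+\infty}-\MM(0)|_{\MM(0)} + 4\nu(\MM)\|G\|_\MM + 2\nu(\MM)\|G\|_\MM^2$, together with a parallel Lipschitz bound $\|\Psi(G_1)-\Psi(G_2)\|_\MM \le 4\nu(\MM)(1+r)\|G_1-G_2\|_\MM$ on any ball of radius $r$. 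Choosing $r$ as the smaller root of
\[
2\nu(\MM)r^2 - (1-4\nu(\MM))r + |F^{+\infty}-\MM(0)|_{\MM(0)} = 0
\]
--- which is real, positive, and strictly less than $\tfrac{1}{4\nu(\MM)}-1$ under the hypothesis $|F^{+\infty}-\MM(0)|_{\MM(0)} < \tfrac{(1-4\nu(\MM))^2}{8\nu(\MM)}$, and which reduces after elementary algebra to the expression in the statement --- one simultaneously secures invariance of $\overline{B_{\XX_\MM}(0,r)}$ under $\Psi$ and the strict contraction $4\nu(\MM)(1+r) < 1$. The Banach contraction principle then supplies a unique fixed point.

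To finish, set $F := \MM + G$ and $F^\init_+ := F(0)$. Writing (\ref{TTPmFP}) at two times $t_1 < t_2$ and subtracting produces exactly the mild-solution identity from Definition \ref{D-MildSol}, so $F$ is a mild solution of the Boltzmann equation with initial datum $F^\init_+$, and clearly $|F^\init_+ - \MM(0)|_{\MM(0)} \le \|G\|_\MM \le r$. The scattering assertion is the only step that requires care: rewriting (\ref{TTPmFP}) as $F(t) - e^{-t\ASf}F^{+\infty} = -\int_t^\infty e^{(s-t)\ASf}\BB(F,F)(s)\,ds$ and performing the change of variables $y = x - tv$ in the resulting $L^1$ integral, the transport invariance of $\MM$ reduces matters to showing that $\iint \MM(v,y,0) \int_t^\infty \AA(\MM)(v, y+sv, s)\,ds\,dv\,dy \to 0$, which follows from dominated convergence using the tail estimate from Lemma \ref{L-Nu} as a uniform bound and the $(v,y)$-integrability of $\MM(\cdot,\cdot,0)$. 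Hence $\TT^+F^\init_+ = F^{+\infty}$, and uniqueness of $F^\init_+$ in $\overline{B_{\YY_{\MM(0)}}(\MM(0),r)}$ is inherited from uniqueness of the fixed point. The point to verify most carefully --- and really the conceptual crux --- is that Lemma \ref{L-Nu} controls the integral along an \emph{entire} characteristic line, hence symmetrically in past and future; it is precisely this symmetry that lets the smallness $\nu(\MM) < \tfrac14$ used to produce eternal solutions from Cauchy data in Theorem \ref{T-Etern} suffice to produce them from scattering data here, with no additional hypothesis on $F^{+\infty}$ beyond its distance to $\MM(0)$.
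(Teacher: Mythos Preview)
Your proof is correct and follows essentially the same route as the paper's: a Banach fixed-point argument driven by the dispersion estimate of Lemma \ref{L-Nu}, yielding the same quadratic in $r$ and the same contraction constant $4\nu(\MM)(1+r)$. The only cosmetic difference is the choice of unknown: the paper pulls back along free transport and seeks a fixed point of $f\mapsto F^{+\infty}-\FF_+(f)$ in $\overline{B_{\XX_\MM}(\MM,r)}$, with $f(t)=e^{t\ASf}F(t)$, whereas you stay in the original frame and work with $G=F-\MM$ in $\overline{B_{\XX_\MM}(0,r)}$; the two formulations are interchangeable via $e^{t\ASf}\MM(t)=\MM(0)$. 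Your explicit treatment of the $L^1$ scattering limit via dominated convergence (using the nonnegativity of $\AA(\MM)$ and the finiteness of the full-line integral in Lemma \ref{L-Nu} to get pointwise decay of the tail) is in fact more detailed than the paper, which simply asserts the convergence.
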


We henceforth denote 
\begin{equation}\label{TT-1=}
(\TT^\pm)^{-1}F^{\pm\infty}:=F^\init_\pm
\end{equation}
the initial data obtained in Theorem \ref{T-TTpm}. In other words, Theorem \ref{T-TTpm} defines right inverse operators $(\TT^\pm)^{-1}:\,F^{\pm\infty}\mapsto F^\init_\pm$ such that $\TT^\pm\circ(\TT^\pm)^{-1}=\mathrm{Id}$ on the ball
$B_{\YY_{\MM(0)}}\left(\MM(0),\tfrac{(1-4\nu(\MM))^2}{8\nu(\MM)}\right)$. Besides
$$
(\TT^\pm)^{-1}\left(B_{\YY_{\MM(0)}}\left(\MM(0),\tfrac{(1-4\nu(\MM))^2}{8\nu(\MM)}\right)\right)\subset B_{\YY_{\MM(0)}}\left(\MM(0),\tfrac1{4\nu(\MM)}-1\right)\,.
$$

Notice the difference between Theorems \ref{T-121}Ê and \ref{T-TTpm}. Theorem \ref{T-121}Ê establishes the one-to-one property for the operators $F^\init\mapsto F^{\pm\infty}$ possibly for large initial data, under the only assumption 
that the Boltzmann equation has a mild solution with initial data $F^\init$ that remains below some global Maxwellian. However, we do not know whether Theorem \ref{T-121} holds for hard potentials. Theorem \ref{T-TTpm} on the other 
hand holds for all cutoff kernels, for hard as well as soft potentials, and implies that the operators $F^\init\mapsto F^{\pm\infty}$ are not only one-to-one but also onto. But Theorem \ref{T-TTpm} is only a local result: it holds only in some 
neighborhood of a global Maxwellian. 

Theorems \ref{T-121} and \ref{T-TTpm} answer in the negative the question raised in the problem stated at the end of section \ref{S-Intro}. As explained above, the asymptotic behavior of solutions of the Boltzmann equation over $\RD$
for large positive time is of much greater physical interest than the large negative time limit. Nevertheless, the mathematical methods used in the proof of Theorems \ref{T-121} and \ref{T-TTpm} allow treating both limits in the same way.

In the next theorem, we discuss the continuity properties of the operators $\TT^\pm$ and of their right inverses $(\TT^\pm)^{-1}$ defined in (\ref{TT-1=}).

\begin{Thm}\label{T-ContTT}
Assume that $\bSf$ has separated form with $\beta\in(1-\D,1]$, and let $\MM$ be a global Maxwellian with $\nu(\MM)$ defined in (\ref{DefNu}) such that $\nu(\MM)<\tfrac14$, and let $0\le\eps<\frac{(1-4\nu(\MM))^2}{8\nu(\MM)}$.

\smallskip
\noindent
(a) For $F^\init_1,F^\init_2\in\overline{B_{\YY_{\MM(0)}}\left(\MM(0),\eps\right)}$ one has
$$
|\TT^\pm F^\init_1-\TT^\pm F^\init_2|_{\MM(0)}\le\frac{|F^\init_1-F^\init_2|_{\MM(0)}}{\sqrt{(1-4\nu(\MM))^2-8\nu(\MM)\eps}}\,.
$$

\smallskip
\noindent
(b) For $F^{\pm\infty}_1,F^{\pm\infty}_2\in\overline{B_{\YY_{\MM(0)}}\left(\MM(0),\eps\right)}$ one has
$$
|(\TT^\pm)^{-1}F^{\pm\infty}_1-(\TT^\pm)^{-1}F^{\pm\infty}_2|_{\MM(0)}\le\frac{|F^{\pm\infty}_1-F^{\pm\infty}_2|_{\MM(0)}}{\sqrt{(1-4\nu(\MM))^2-8\nu(\MM)\eps}}\,.
$$
\end{Thm}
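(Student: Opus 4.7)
The plan is to reduce both parts of the theorem to the Duhamel-type estimate that underlies Theorem \ref{T-Stab}, applied respectively forward from $t=0$ and backward from $t=\pm\infty$, and then to convert the resulting bound in $\XX_\MM$-norm into the claimed bound in $\YY_{\MM(0)}$-norm.

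For part (a), given $F_1^\init,F_2^\init\in\overline{B_{\YY_{\MM(0)}}(\MM(0),\eps)}$, Theorem \ref{T-Etern} produces the mild solutions $F_j(t)=S_tF_j^\init\in\XX_\MM$ with $\|F_j-\MM\|_\MM\le r(\eps)$, where $r(\eps)$ is the smaller root of $2\nu(\MM)r^2-(1-4\nu(\MM))r+\eps=0$, and Theorem \ref{T-Stab} yields directly
$$
\|F_1-F_2\|_\MM\le\frac{|F_1^\init-F_2^\init|_{\MM(0)}}{\sqrt{(1-4\nu(\MM))^2-8\nu(\MM)\eps}}\,.
$$
To convert this into the desired $\YY_{\MM(0)}$-bound on $\TT^\pm F_1^\init-\TT^\pm F_2^\init$, I would observe that the pointwise inequality $|F_1(t)-F_2(t)|(v,x)\le\|F_1-F_2\|_\MM\,\MM(v,x,t)$ combined with $e^{t\ASf}\MM(t)=\MM(0)$ gives $|e^{t\ASf}(F_1(t)-F_2(t))|\le\|F_1-F_2\|_\MM\,\MM(0)$ a.e. on $\RDRD$. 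Since $e^{t\ASf}F_j(t)\to\TT^\pm F_j^\init$ in $L^1(\RDRD)$ as $t\to\pm\infty$ by Theorem \ref{T-ExistLim}, extracting an a.e.\ convergent subsequence and passing to the pointwise limit yields the claim.

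For part (b), I would use Theorem \ref{T-TTpm}: given $F_1^{\pm\infty},F_2^{\pm\infty}\in\overline{B_{\YY_{\MM(0)}}(\MM(0),\eps)}$, its fixed-point construction delivers mild solutions $F_j$ with $\|F_j-\MM\|_\MM\le r(\eps)$ satisfying the backward Duhamel representation
$$
F_j(t)=e^{-t\ASf}F_j^{\pm\infty}\mp\int_t^{\pm\infty}e^{(s-t)\ASf}\BB(F_j,F_j)(s)\,\ds\,.
$$
Subtracting, decomposing $\BB(F_1,F_1)-\BB(F_2,F_2)=\BB(F_1-F_2,F_1)+\BB(F_2,F_1-F_2)$, applying the bilinear bounds (\ref{BB-<})--(\ref{BB+<}), and invoking Lemma \ref{L-Nu} on the relevant half-line of integration, the same bookkeeping as in Theorem \ref{T-Stab} produces
$$
\|F_1-F_2\|_\MM\le|F_1^{\pm\infty}-F_2^{\pm\infty}|_{\MM(0)}+4\nu(\MM)(1+r(\eps))\|F_1-F_2\|_\MM\,.
$$
The algebraic identity $1-4\nu(\MM)(1+r(\eps))=\sqrt{(1-4\nu(\MM))^2-8\nu(\MM)\eps}$, obtained by substituting the explicit formula for $r(\eps)$, rearranges this into the claimed bound for $\|F_1-F_2\|_\MM$. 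Evaluating at $t=0$ and using $|F_1(0)-F_2(0)|_{\MM(0)}\le\|F_1-F_2\|_\MM$ together with $F_j(0)=(\TT^\pm)^{-1}F_j^{\pm\infty}$ completes the argument.

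I expect the main obstacle to lie in the rigorous handling of the improper integral in the backward Duhamel identity, together with the justification that the $s\to\pm\infty$ limit of the finite-time Duhamel formula for $F_j$ genuinely produces $e^{-t\ASf}F_j^{\pm\infty}$. Both points should follow by combining the uniform-in-$J$ bound of Lemma \ref{L-Nu} (which makes the integrand integrable in $\XX_\MM$-norm on any half-line) with the $L^1$-convergence of $e^{t\ASf}F_j(t)$ supplied by Theorem \ref{T-ExistLim}, and extracting a.e.\ convergent subsequences as in part (a).
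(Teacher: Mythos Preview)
Your proposal is correct and follows essentially the same approach as the paper: part (a) is exactly the paper's argument (apply Theorem \ref{T-Stab}, then pass to the limit using $e^{t\ASf}\MM(t)=\MM(0)$ and the $L^1$-convergence from Theorem \ref{T-ExistLim}), and part (b) is the paper's fixed-point Lipschitz estimate rewritten in the $F$-variables rather than the $f=e^{t\ASf}F$-variables, leading to the identical inequality $\|F_1-F_2\|_\MM\le|F_1^{\pm\infty}-F_2^{\pm\infty}|_{\MM(0)}+4\nu(\MM)(1+r)\|F_1-F_2\|_\MM$ and the same algebraic simplification. The only cosmetic difference is that the paper invokes the prepackaged Lipschitz bound (\ref{LipFF+}) on $\FF_\pm$ rather than redoing the bilinear estimate via (\ref{BB-<})--(\ref{BB+<}) and Lemma \ref{L-Nu}, but the content is the same.
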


\subsection{Scattering theory for the Boltzmann equation}\label{SS-Scatter}

The results obtained in the previous section imply the existence of a scattering regime for the Boltzmann equation set in the Euclidean space $\RD$, at least in the vicinity of some global Maxwellian states --- i.e. those for
which $\nu(\MM)<\tfrac14$.

In the words of P. Lax and R. Phillips \cite{LaxPhillips}, ``Scattering theory compares the asymptotic behavior of an evolving system as $t$ tends to $-\infty$ with its asymptotic behavior as $t$ tends to $+\infty$''. Chapter 2 of
\cite{LaxPhillips} defines the notion of scattering operator in terms of the translation representation of unitary groups in Hilbert  spaces. 

Since the Boltzmann equation involves entropy production via Boltzmann's H theorem, the one-parameter group $S_t$ constructed in Theorem \ref{T-Etern} is very different from a unitary group defined on a Hilbert space.
Therefore, it is a priori unclear that the concepts of scattering theory defined in such terms can be applied in the context of the kinetic theory of gases. 

There is however a scattering theory for the linear Boltzmann equation, sketched for instance in section XI.12 of \cite{ReedSimon}. The notion of scattering operator for the linear Boltzmann equation considered by M. Reed 
and B. Simon differs from the theory described in \cite{LaxPhillips} in two ways, which they summarize as follows. ``In the first place, the natural space of states is not a Hilbert space but a cone in a (non-Hilbert) vector space; 
in the second place, the equation of motion we describe defines a one-sided dynamics [\dots]'' For that reason, the definition of the wave and scattering operators in formulas (239)-(240) of \cite{ReedSimon} differs from the 
one in \cite{LaxPhillips}. 

The present section uses the same formalism as \cite{LaxPhillips}. 

\begin{Def}\label{D-SS}
Let $\MM$ be a global Maxwellian. Let $F^{+\infty}$ and $F^{-\infty}$ be two elements of $\YY_{\MM(0)}$. We say that $F^{+\infty}=\SS F^{-\infty}$ if there exists a unique mild solution $F$ of the Boltzmann equation on 
$\RDRDR$ such that $|F(v,x,t)|\le\MM(v,x,t)$ for a.e. $(v,x,t)\in\RDRDR$ for some global Maxwellian $\MM$, and 
$$
\|F(t)-e^{-t\ASf}F^{+\infty}\|_{L^1(\RDRD)}\to 0\quad\hbox{ as }t\to+\infty\,,
$$
while 
$$
\|F(t)-e^{-t\ASf}F^{-\infty}\|_{L^1(\RDRD)}\to 0\quad\hbox{ as }t\to-\infty\,.
$$
\end{Def}

\smallskip
We have put together in the next theorem the main properties of the scattering operator $\SS$.

\begin{Thm}\label{T-SSprop}
Assume that $\bSf$ has separated form with $\beta\in(1-\D,1]$, and let $\MM$ be a global Maxwellian with $\nu(\MM)$ defined in (\ref{DefNu}) such that $\nu(\MM)<\tfrac14$. 

\smallskip
\noindent
(a) For each $F^{-\infty}\in B_{\YY_{\MM(0)}}(\MM(0),\frac{(1-4\nu(\MM))^2}{8\nu(\MM)})$, there exists a unique $F^{+\infty}\in\overline{B_{\YY_{\MM(0)}}(\MM(0),r)}$ with
$$
r=\left(\frac1{4\nu(\MM)}-1\right)\left(1-\sqrt{1-\frac{8\nu(\MM)|F^{-\infty}-\MM(0)|_{\MM(0)}}{(1-4\nu(\MM))^2}}\right)\,,
$$
such that
$$
\SS F^{-\infty}=F¬^{+\infty}\,.
$$
In particular $\SS\MM(0)=\MM(0)$.

\smallskip
\noindent
(b) For each $F^{+\infty}\in B_{\YY_{\MM(0)}}(\MM(0),\frac{(1-4\nu(\MM))^2}{8\nu(\MM)})$, there exists a unique $F^{-\infty}\in\overline{B_{\YY_{\MM(0)}}(\MM(0),r)}$ with
$$
r=\left(\frac1{4\nu(\MM)}-1\right)\left(1-\sqrt{1-\frac{8\nu(\MM)|F^{+\infty}-\MM(0)|_{\MM(0)}}{(1-4\nu(\MM))^2}}\right)\,,
$$
such that
$$
\SS F^{-\infty}=F¬^{+\infty}\,.
$$
The distribution function $F^{-\infty}$ so obtained is henceforth denoted by 
$$
\SS^{-1}F^{+\infty}:=F^{-\infty}\,.
$$
In other words, the map $\SS^{-1}$ so defined is a right inverse of $\SS$, i.e. $\SS\circ(\SS^{-1})=\hbox{Id}$ on the ball $B_{\YY_{\MM(0)}}\left(\MM(0),\tfrac{(1-4\nu(\MM))^2}{8\nu(\MM)}\right)$.

\smallskip
\noindent
(c) The maps $\SS$ and $\SS^{-1}$ are locally Lipschitz continuous on $B_{\YY_{\MM(0)}}(\MM(0),\frac{(1-4\nu(\MM))^2}{8\nu(\MM)})$. More precisely, for each $\eps$ satisfying $0<\eps<\frac{(1-4\nu(\MM))^2}{8\nu(\MM)}$ and each 
$F_1^{\pm\infty},F_2^{\pm\infty}\in\overline{B_{\YY_{\MM(0)}}(\MM(0),\eps)}$, one has
$$
\begin{aligned}
|\SS F_1^{-\infty}-\SS F_2^{-\infty}|_{\MM(0)}\le\frac{|F_1^{-\infty}-F_2^{-\infty}|}{\sqrt{(1-4\nu(\MM))^2-8\nu(\MM)\eps}}\,,\quad\hbox{ and }
\\
|\SS^{-1} F_1^{+\infty}-\SS^{-1} F_2^{+\infty}|_{\MM(0)}\le\frac{|F_1^{+\infty}-F_2^{+\infty}|}{\sqrt{(1-4\nu(\MM))^2-8\nu(\MM)\eps}}\,.
\end{aligned}
$$

\smallskip
\noindent
(d) The scattering operator $\SS$ satisfies the global conservation laws
$$
\iint_{\RDRD}\left(\begin{matrix}1\\ v\\ |v|^2\\ x\\ |x|^2\\ x\cdot v\\ x\wedge v\end{matrix}\right)\SS F^{-\infty}(v,x)\,\dv\dx=\iint_{\RDRD}\left(\begin{matrix}1\\ v\\ |v|^2\\ x\\ |x|^2\\ x\cdot v\\ x\wedge v\end{matrix}\right)F^{-\infty}(v,x)\,\dv\dx
$$
for each $F^{-\infty}\in B_{\YY_{\MM(0)}}(\MM(0),\frac{(1-4\nu(\MM))^2}{8\nu(\MM)})$.

\smallskip
\noindent
(e) The scattering operator $\SS$ decreases the Boltzmann H function: if $\tfrac12<4\nu(\MM)<1$, or if $0<4\nu(\MM)\le\tfrac12$ and $|F^{-\infty}-\MM(0)|_{\MM(0)}<1-6\nu(\MM)$, then
$$
H[F^{-\infty}]\ge H[\SS F^{-\infty}]\,,
$$
with equality if and only if there exists a global Maxwellian $\tilde\MM$ such that $F^{-\infty}=\tilde\MM(0)$.

\smallskip
\noindent
(f) Assume that $\beta\le 0$. Assume moreover that $\tfrac12\le 4\nu(\MM)<1$, or that $0<4\nu(\MM)<\tfrac12$ and that $F^{-\infty}$ satisfies the condition $|F^{-\infty}-\MM(0)|_{\MM(0)}\le 1-6\nu(\MM)$. Let $\MM_{F^{-\infty}}$ be the 
global Maxwellian such that
$$
\iint_{\RDRD}\left(\begin{matrix}1\\ v\\ |v|^2\\ x\\ |x|^2\\ x\cdot v\\ x\wedge v\end{matrix}\right)F^{-\infty}(v,x)\,\dv\dx
=
\iint_{\RDRD}\left(\begin{matrix}1\\ v\\ |v|^2\\ x\\ |x|^2\\ x\cdot v\\ x\wedge v\end{matrix}\right)\MM_{F^{-\infty}}(v,x,0)\,\dv\dx\,;
$$
(see \cite{Lvrmr}). Then
$$
H[\SS F^{-\infty}]\ge H[\MM_{F^{-\infty}}]\,,
$$
with equality if and only if $F^{-\infty}=\SS F^{-\infty}=\MM_{F^{-\infty}}(0)$.
\end{Thm}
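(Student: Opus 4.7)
The strategy is to set $\SS=\TT^+\circ(\TT^-)^{-1}$ and to exploit the fact that the mild solution produced by the fixed-point argument underlying Theorems~\ref{T-Etern} and~\ref{T-TTpm} is an \emph{eternal} element of $\overline{B_{\XX_\MM}(\MM,r)}$, to which Theorem~\ref{T-ExistLim} and the H theorem both apply. For (a)--(b), given $F^{-\infty}$ in the prescribed ball, let $F^\init_-=(\TT^-)^{-1}F^{-\infty}$ as provided by Theorem~\ref{T-TTpm}; the associated mild solution $F=S_\cdot F^\init_-$ is eternal with $\|F-\MM\|_\MM\le r$, and Theorem~\ref{T-ExistLim} furnishes a unique $F^{+\infty}=:\SS F^{-\infty}\in\YY_{\MM(0)}$. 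The bound $|F^{+\infty}-\MM(0)|_{\MM(0)}\le r$ comes from the identity $|e^{t\ASf}\phi|_{\MM(0)}=|\phi|_{\MM(t)}$, the $L^1$ convergence $e^{t\ASf}F(t)\to F^{+\infty}$, and lower semicontinuity of the weighted sup norm. The identity $\SS\MM(0)=\MM(0)$ follows because $F\equiv\MM$ is the unique eternal fixed point with $F^{-\infty}=\MM(0)$; part (b) is the symmetric construction using $(\TT^+)^{-1}$ and then $\TT^-$.

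For (c), rather than composing the bounds of Theorem~\ref{T-ContTT}(a)--(b), a direct contraction on the mild formulation $e^{t\ASf}F_j(t)=F_j^{-\infty}+\int_{-\infty}^t e^{s\ASf}\BB(F_j,F_j)(s)\,\ds$ gives the sharper estimate. Subtracting the two equations, expanding $\BB(F_1,F_1)-\BB(F_2,F_2)$ bilinearly, and invoking Lemma~\ref{L-Nu} produces --- exactly as in the proof of Theorem~\ref{T-Stab} --- an inequality of the form
\[
\|F_1-F_2\|_\MM\le|F_1^{-\infty}-F_2^{-\infty}|_{\MM(0)}+2\nu(\MM)\bigl(\|F_1-\MM\|_\MM+\|F_2-\MM\|_\MM\bigr)\|F_1-F_2\|_\MM,
\]
which upon solving yields the claimed bound after noting $|F^{+\infty}_1-F^{+\infty}_2|_{\MM(0)}\le\|F_1-F_2\|_\MM$; the estimate for $\SS^{-1}$ is symmetric. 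For (d), each listed density has the form $\phi_0(v,x-tv)$ for a $t$-independent polynomial $\phi_0$ of degree $\le 2$ (the non-obvious case being $x\wedge v=(x-tv)\wedge v$, since $v\wedge v=0$). Theorem~B(b) makes $\iint\phi_0(v,x-tv)F(v,x,t)\dv\dx$ independent of $t$; substituting $y=x-tv$ rewrites it as $\iint\phi_0(v,y)(e^{t\ASf}F)(v,y,t)\dv\,\dee y$, and the $L^1$ convergence of $e^{t\ASf}F(t)$ combined with the polynomial-times-Maxwellian bound yields the limit $\iint\phi_0(v,y)F^{\pm\infty}(v,y)\dv\,\dee y$ as $t\to\pm\infty$.

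For (e) and (f), under the positivity condition, Theorem~\ref{T-Etern}(b) gives $(1-r)\MM\le F\le(1+r)\MM$ with $r\le 1$, so the H theorem applies and $t\mapsto H[F(t)]$ is nonincreasing. Using that the Boltzmann $H$ function is invariant under $e^{t\ASf}$, the key step is the convergence $\lim_{t\to\pm\infty}H[F(t)]=H[F^{\pm\infty}]$, obtained by dominated convergence against the integrable envelope $C(1+|v|^2+|x|^2)\MM(t)$ supplied by the quadratic form of $\ln\MM(t)$. The monotonicity then yields $H[F^{-\infty}]\ge H[\SS F^{-\infty}]$; the equality case forces $dH/dt\equiv 0$, hence $\BB(F,F)\equiv 0$ by the H theorem~(b), so $F$ is a local Maxwellian that also solves the free transport equation, and the classification in \cite{Lvrmr} identifies $F$ with a global Maxwellian $\tilde\MM$, whence $F^{\pm\infty}=\tilde\MM(0)$. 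For (f), part (d) shows that $\SS F^{-\infty}$ and $F^{-\infty}$ share their conservation integrals, so they define the same associated global Maxwellian $\MM_{F^{-\infty}}$; the variational characterization of $\MM_{F^{-\infty}}(0)$ from \cite{Lvrmr} recalled in the introduction then gives $H[\SS F^{-\infty}]\ge H[\MM_{F^{-\infty}}(0)]$, with equality iff $\SS F^{-\infty}=\MM_{F^{-\infty}}(0)$; combined with the equality case of (e), this forces $F^{-\infty}=\SS F^{-\infty}=\MM_{F^{-\infty}}(0)$.

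The main obstacle is the entropy convergence $H[F(t)]\to H[F^{\pm\infty}]$ used in (e): $z\mapsto z\ln z$ is not Lipschitz at $0$, but the explicit Gaussian envelope $(1+r)\MM(t)$ together with the fact that $\ln\MM(t)$ is a quadratic polynomial in $(v,x)$ supplies an integrable dominant and allows dominated convergence to close the argument; the remainder of the proof only recycles the contraction and Duhamel estimates already used in the earlier theorems.
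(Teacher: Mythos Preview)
Your arguments for (a)--(e) are essentially those of the paper and are correct. In (c) there is a minor slip: the bilinear estimate from Lemma~\ref{L-CCpm} gives the factor $2\nu(\MM)\|F_1+F_2\|_\MM$, not $2\nu(\MM)(\|F_1-\MM\|_\MM+\|F_2-\MM\|_\MM)$; you are missing the additive $+2$ coming from $\|\MM\|_\MM=1$, which is what produces the contraction factor $4\nu(\MM)(1+r)$ and, after inserting the explicit formula for $r$, the square root in the denominator.

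There is, however, a genuine gap in the equality case of (f). You correctly reduce to the implication $\SS F^{-\infty}=\MM_{F^{-\infty}}(0)\ \Rightarrow\ F^{-\infty}=\MM_{F^{-\infty}}(0)$, but your appeal to ``the equality case of (e)'' does not close the argument. Statement (e) requires the \emph{hypothesis} $H[F^{-\infty}]=H[\SS F^{-\infty}]$, which you have not established from $\SS F^{-\infty}=\MM_{F^{-\infty}}(0)$ alone: the variational characterization only gives $H[F^{-\infty}]\ge H[\MM_{F^{-\infty}}(0)]=H[\SS F^{-\infty}]$, which points in the same direction as (e) and yields no equality. Nor can you appeal to the local invertibility in (b), since there is no reason for $\MM_{F^{-\infty}}(0)$ to lie in the ball around $\MM(0)$ on which $\SS$ and $\SS^{-1}$ are defined.

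The paper closes this step differently: once $F^{+\infty}=\MM_{F^{-\infty}}(0)$, both the eternal solution $F$ associated with $F^{-\infty}$ and the global Maxwellian $\MM_{F^{-\infty}}$ itself are mild solutions dominated by a global Maxwellian and having the same limit at $+\infty$; Theorem~\ref{T-121} then forces $F\equiv\MM_{F^{-\infty}}$, whence $F^{-\infty}=\MM_{F^{-\infty}}(0)$. This is exactly where the hypothesis $\beta\le 0$ in (f) is used --- Theorem~\ref{T-121} is only available for soft potentials --- and your proposal never invokes that hypothesis.
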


\smallskip
The existence of a scattering operator has been established by Bony \cite{Bony} in the case of the discrete velocity models of the kinetic theory of gases --- see also lecture 16 in \cite{Tartar}, especially the sentence following formula 
(16.16). However the scattering theory so obtained is noticeably different from the one constructed in the present work. For instance, the operator analogous to $\TT^+$ in \cite{Bony} is known to be discontinuous (see section 5.6 in 
\cite{Bony}). A significant difference between the real Boltzmann equation and all the discrete velocity models is of course the class of global Maxwellians studied in \cite{Lvrmr} in the former case. Indeed, no analogue of the class of 
global Maxwellians is known to exist in general for discrete velocity models.

\smallskip
One way to poetically recast our results is that the traditional argument for the heat death of the universe is wrong, at least when applied to a universe that expands forever.  That argument asserts the universe will approach its entropy 
maximizing state, which will be a cooling homogeneous state.  However our results show that an unconfined system generally does not approach its entropy maximizing state, but rather has a dispersing asymptotic state upon which 
much of its past depends through a Lipschitz continuous bijection.  In physical terms, most particles do not experience enough collisions for the entropy maximizing state to be approached before the unconfined system disperses.  Of 
course, the unconfined system modeled here by the Boltzmann equation is not a good model for the universe and we are not claiming that the fate of the universe is not a cooling homogeneous state. Rather, we are only pointing out 
that the traditional heat death argument has gaps in it. 

\section{Proofs of Lemmas \ref{L-Mu}   and  \ref{L-Nu}}

\begin{proof}[Proof of Lemma \ref{L-Mu}]
The first inequality follows from the formula (\ref{AA(MM)=}) and observing that
$$
\aSf(w)=\int_\RD|w-w_*|^\beta M[1,0,1](w_*)\dee w_*\le\int_\RD|w_*|^\beta M[1,0,1](w_*)\dee w_*=\aSf_\beta(0)
$$
because $w\mapsto M[1,0,1](w)$ and $w\mapsto|w|^\beta$ are nonincreasing functions\footnote{\textbf{Lemma.} \textit{Let $f,g\in C((0,+\infty))$ be nonincreasing on $(0,+\infty)$, and such that the function $y\mapsto f(|x-y|)g(|y|)$ 
is integrable for each $x\in\RD$. Then
$$
\int_\RD f(|x-y|)g(]y|)dy\le\int_\RD f(|y|)g(|y|)dy\quad\hbox{ for each }x\in\RD\,.
$$}

\begin{proof}
Elementary computations show that, for each $x\in\RD$, one has
$$
\begin{aligned}
\int_\RD f(|y|)g(]y|)dy-\int_\RD f(|x-y|)g(|y|)dy=&\tfrac12\int_\RD(f(|y|)-f(|x-y|))(g(|y|)-g(|x-y|))dy
\\
=&\tfrac12\int_{|x-y|>|y|}(f(|y|)-f(|x-y|))(g(|y|)-g(|x-y|))dy
\\
&+\tfrac12\int_{|x-y|<|y|}(f(|y|)-f(|x-y|))(g(|y|)-g(|x-y|))dy\,.
\end{aligned}
$$
In the first integral on the right hand side, one has both $f(|y|)-f(|x-y|)\ge 0$ and $g(|y|)-g(|x-y|)\ge 0$ since $f$ and $g$ are nonincreasing. By the same token, $f(|y|)-f(|x-y|)\le 0$ and $g(|y|)-g(|x-y|)\le 0$ in the second integral on the 
right hand side. Hence both integrals on the right hand side are nonnegative.\end{proof}} of $|w|$ since $\beta\le 0$. (The argument in the footnote follows the proof of inequality (10.2.1) on p. 261 in \cite{HLP}). Thus
$$
\int_\R\|\AA(\MM)(t)\|_{L^\infty(\RDRD)}\dt\le m\BAR\bSf\sqrt{\det(\tfrac{Q}{2\pi})}\aSf_\beta(0)\int_\R\theta(t)^{\frac{\D+\beta}2}\dt<\infty
$$
since $\theta(t)=O(1/t^2)$ as $t\to\pm\infty$ (by \ref{theta=}) and $\frac{\D+\beta}2>\tfrac12$. The explicit formula for $\aSf_\beta(0)$ comes from an explicit computation (we recall that $|\SD|=2^{\beta/2}\Gamma(\tfrac{\D}2)$).
\end{proof}

\begin{proof}[Proof of Lemma \ref{L-Nu}]
First
$$
\begin{aligned}
\int_J\AA(\MM)(v,x-tv+sv, s)\,\ds&=\BAR\bSf\int_\RD|v-v_*|^\beta\int_J\MM(v_*,x-tv+sv,s)\,\ds\dv_*
\\
&=\BAR\bSf\int_\RD|v-v_*|^\beta\int_J\MM(v_*,x-tv+s(v-v_*),0)\,\ds\dv_*\,.
\end{aligned}
$$
Indeed, since $(\del_t+v\DOT\GRAD)\MM=0$, one has 
$$
\MM(v_*,x-tv+sv,s)=\MM(v_*,x-tv+s(v-v_*),0)\,.
$$
Then
$$
\begin{aligned}
q(v_*,X+Z,0)&=\tfrac12(a|X+Z|^2+c|v_*|^2+2(X+Z)\DOT(bI+B)v_*)
\\
&=\tfrac12(a|X+Z+\tfrac1a(bI+B)^*v_*|^2-\tfrac1a|(bI+B)^Tv_*|^2+c|v_*|^2)\,.
\end{aligned}
$$
Since $B=-B^T$, one has
$$
|(bI+B)^Tv_*|^2=((bI+B)^Tv_*|(bI+B)^Tv_*)=b^2|v_*|^2+(B^Tv_*|B^Tv_*)=b^2|v_*|^2-(B^2v_*|v_*)\,,
$$
so that
$$
\begin{aligned}
q(v_*,X+Z,0)&=\tfrac12\left(a\left|X+Z+\frac1a(bI+B)^*v_*\right|^2+\frac1{a}(ac|v_*|^2-b^2|v_*|^2+(B^2v_*|v_*))\right)
\\
&=\tfrac12\left(a\left|X+Z+\frac1a(bI+B)^Tv_*\right|^2+\frac1{a}(Qv_*|v_*)\right)\,.
\end{aligned}
$$
Thus
$$
\begin{aligned}
\left|\int_J\MM(v_*,x-tv+s(v-v_*),0)\,\ds\right|&
\\
=\frac{m}{(2\pi)^\D}\sqrt{\det Q}e^{-(Qv_*|v_*)/2a}\left|\int_Je^{-a|x-tv+\frac1a(bI+B)^Tv_*+s(v-v_*)|^2/2}\,\ds\right|&
\\
\le\frac{m}{(2\pi)^\D}\sqrt{\det Q}e^{-(Qv_*|v_*)/2a}e^{-a(|x-tv+\frac1a(bI+B)^Tv_*|^2-(x-tv+\frac1a(bI+B)^Tv_*|v-v_*)^2/|v-v_*|^2)}&
\\
\times\int_\R e^{-as^2|v-v_*|^2/2}\,\ds&
\\
=\frac{m}{(2\pi)^\D}\sqrt{\det Q}e^{-(Qv_*|v_*)/2a}e^{-a(|x-tv+\frac1a(bI+B)^Tv_*|^2-(x-tv+\frac1a(bI+B)^Tv_*|v-v_*)^2/|v-v_*|^2)}&
\\
\times\sqrt{\frac{2\pi}{a}}\frac1{|v-v_*|}&
\\
\le\frac{m}{(2\pi)^\D}\sqrt{\frac{2\pi}{a}}\frac1{|v-v_*|}\sqrt{\det Q}e^{-(Qv_*|v_*)/2a}&\,.
\end{aligned}
$$
We conclude that
$$
\begin{aligned}
\left|\int_J\AA(\MM)(v,x-tv+sv, s)\,\ds\right|&\le\frac{m\BAR\bSf}{(2\pi)^\D}\sqrt{\frac{2\pi}{a}}\sqrt{\det Q}\int_\RD|v-v_*|^{\beta-1}e^{-(Qv_*|v_*)/2a}\,\dv_*
\\
&\le\frac{m\BAR\bSf}{(2\pi)^\D}\sqrt{\frac{2\pi}{a}}\sqrt{\det Q}\int_{|v-v_*|>1}e^{-(Qv_*|v_*)/2a}\,\dv_*
\\
&+\frac{m\BAR\bSf}{(2\pi)^\D}\sqrt{\frac{2\pi}{a}}\sqrt{\det Q}\int_{|v-v_*|<1}|v-v_*|^{\beta-1}\,\dv_*
\\
&\le\frac{m\BAR\bSf}{(2\pi)^\D}\sqrt{\frac{2\pi}{a}}\left((2\pi a)^{\D/2}+\frac{|\SD|\sqrt{\det Q}}{\beta+\D-1}\right)\,.
\end{aligned}
$$
\end{proof}

\section{Existence, uniqueness and stability for the Cauchy problem}

In the case of hard potentials, $\AA(\MM)\notin L^\infty(\RDRDR)$; hence $\BB_\pm$ does not map $\XX_\MM\times\XX_\MM$ into $\XX_\MM$. Controling the collision integral requires integrating first along the characteristic 
lines of the free transport operator and using the dispersion effect of the free transport operator as in the previous section (Lemma \ref{L-Nu}), as explained in the next lemma.

\begin{Lem}\label{L-CCpm}
Consider the maps defined on $\XX_\MM\times\XX_\MM$ by
$$
\CC_\pm:\,(F,G)\mapsto\CC_\pm(F,G)(v,x,t):=\int_0^t\BB_\pm(F,G)(v,x-tv+sv,s)\,\ds\,,
$$
and set 
$$
\CC:=\CC_+-\CC_-\,.
$$
Then, for all $F,G\in\XX_\MM$, one has
$$
\|\CC(F,F)-\CC(G,G)\|_\MM\le 2\nu(\MM)\|F+G\|_\MM\|F-G\|_\MM\,,
$$
and
$$
\|\CC(F,F)\|_\MM\le 2\nu(\MM)(2+\|F-\MM\|_\MM)\|F-\MM\|_\MM\,.
$$
\end{Lem}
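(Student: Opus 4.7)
The plan is to reduce both inequalities to the single bilinear estimate
$$
\|\CC_\pm(F,G)\|_\MM\le\nu(\MM)\|F\|_\MM\|G\|_\MM\qquad\text{for all }F,G\in\XX_\MM,
$$
and then obtain the two stated bounds by short algebraic manipulations that exploit the identity $\BB(\MM,\MM)=0$.

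To establish this bilinear bound, I would apply the pointwise inequalities (\ref{BB-<}) and (\ref{BB+<}) along the free-transport characteristic $s\mapsto(v,x-tv+sv,s)$, which gives
$$
|\BB_\pm(F,G)(v,x-tv+sv,s)|\le\|F\|_\MM\|G\|_\MM\,\AA(\MM)(v,x-tv+sv,s)\,\MM(v,x-tv+sv,s).
$$
Since $\MM$ solves the free-transport equation, it is constant along that characteristic, so $\MM(v,x-tv+sv,s)=\MM(v,x,t)$ factors out of the $s$-integral. Integrating from $0$ to $t$ (with the appropriate orientation if $t<0$) and applying Lemma \ref{L-Nu} to bound the resulting integral of $\AA(\MM)$ by $\nu(\MM)$ yields
$$
|\CC_\pm(F,G)(v,x,t)|\le\nu(\MM)\|F\|_\MM\|G\|_\MM\,\MM(v,x,t),
$$
which is the claimed bilinear estimate.

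For the first claim, I would extend each $\BB_\pm$ by polarization to a symmetric bilinear form (whose diagonal recovers the original quadratic expression), so that the identity $\BB_\pm(F,F)-\BB_\pm(G,G)=\BB_\pm(F+G,F-G)$ is available. This transfers immediately to $\CC=\CC_+-\CC_-$, and combining with the bilinear bound above gives
$$
\|\CC(F,F)-\CC(G,G)\|_\MM\le 2\nu(\MM)\|F+G\|_\MM\|F-G\|_\MM.
$$
For the second claim I would specialize to $G=\MM$: since $\BB(\MM,\MM)=0$ identically, one has $\CC(\MM,\MM)=0$, so the first claim yields $\|\CC(F,F)\|_\MM\le 2\nu(\MM)\|F+\MM\|_\MM\|F-\MM\|_\MM$; writing $F+\MM=(F-\MM)+2\MM$ and using $\|\MM\|_\MM=1$ gives $\|F+\MM\|_\MM\le 2+\|F-\MM\|_\MM$, which produces the stated bound.

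The only place that requires genuine care is the invariance step $\MM(v,x-tv+sv,s)=\MM(v,x,t)$: it is precisely this factorization of $\MM$ out of the $s$-integral that converts the dispersion estimate of Lemma \ref{L-Nu} into an $\XX_\MM$-bound on $\CC_\pm$, and without it one would only recover a time-dependent estimate unsuited to a fixed-point argument in $\XX_\MM$. Once this reduction is in place, both claimed inequalities follow by the algebraic manipulations above.
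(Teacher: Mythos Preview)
Your proposal is correct and follows essentially the same approach as the paper: both arguments use the pointwise bounds (\ref{BB-<})--(\ref{BB+<}), the invariance $\MM(v,x-tv+sv,s)=\MM(v,x,t)$ along free-transport characteristics, and Lemma \ref{L-Nu} to obtain the bilinear $\CC_\pm$ estimate, then combine a polarization identity with $\CC(\MM,\MM)=0$ and the triangle inequality $\|F+\MM\|_\MM\le\|F-\MM\|_\MM+2$. The only cosmetic difference is that the paper writes the polarization explicitly as $\tfrac12\CC_\pm(F+G,F-G)+\tfrac12\CC_\pm(F-G,F+G)$ rather than first symmetrizing $\BB_\pm$, but the content is identical.
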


\begin{proof}
Observe that, for all $F,G\in\XX_\MM$,
$$
\CC_\pm(F,F)-\CC_\pm(G,G)=\tfrac12\CC_\pm(F+G,F-G)+\tfrac12\CC_\pm(F-G,F+G)\,.
$$
Therefore, by (\ref{BB+<}) and (\ref{BB-<})
$$
\begin{aligned}
|\CC_\pm(F,F)-\CC_\pm(G,G)|&\le\|F+G\|_\MM\|F-G\|_\MM
\\
&\quad\times\left|\int_0^t\AA(\MM)(v,x-tv+sv,s)\MM(v,x-tv+sv,s)\,\ds\right|
\\
&\le\|F+G\|_\MM\|F-G\|_\MM\MM(v,x,t)
\\
&\quad\times\left|\int_0^t\AA(\MM)(v,x-tv+sv,s)\,\ds\right|\,,
\end{aligned}
$$
so that
$$
\|\CC_\pm(F,F)-\CC_\pm(G,G)\|_\MM\le\nu(\MM)\|F+G\|_\MM\|F-G\|_\MM\,.
$$
With the definition of $\CC$ in terms of $\CC_\pm$, this gives the first inequality in the lemma. The second inequality in the lemma follows from the first inequality, the fact that
$$
\|F+\MM\|_\MM\le\|F-\MM\|_\MM+2\|\MM\|_\MM=\|F-\MM\|_\MM+2
$$
and the identity $\CC(\MM,\MM)=0$.
\end{proof}

\smallskip
The proof of Theorem \ref{T-Etern} is based on the previous lemma and a fixed point argument.

\begin{proof}[Proof of Theorem \ref{T-Etern}]
To say that $F$ is a (mild) solution of the Boltzmann equation
$$
(\del_t+v\DOT\GRAD)F=\BB(F,F)\,,\qquad F\Big|_{t=0}=F^\init\,,
$$
means that
$$
F(v,x,t)=F^\init(v,x-tv)+\CC(F,F)(v,x,t)\,,
$$
or in other words, that $F$ is a fixed point of the map
$$
\EE:\,G\mapsto F^\init(v,x-tv)+\CC(G,G)(v,x,t)\,.
$$
Assume that $F^\init\in\YY_{\MM(0)}$; then
$$
\EE(F)(v,x,t)-\MM(v,x,t)=F^\init(v,x-tv)-\MM(v,x-tv,0)+\CC(F,F)(v,x,t)
$$
so that
$$
\begin{aligned}
\|\EE(G)-\MM\|_\MM&\le\|F^\init/\MM(0)-1\|_{L^\infty(\RDRD)}+\|\CC(G,G)\|_\MM
\\
&\le|F^\init-\MM(0)|_{\YY_{\MM(0)}}+2\nu(\MM)(\|G-\MM\|^2_\MM+2\|G-\MM\|_\MM)\,.
\end{aligned}
$$

Set $\eps(\MM,r):=(1-4\nu(\MM)(1+\tfrac12r))r$. Obviously, $\eps(\MM,r)<r$, and since $4\nu(\MM)<1$, one has $\eps(\MM,r)\ge 2\nu(\MM)r^2>0$. Thus, if $|F^\init-\MM(0)|_{\YY_{\MM(0)}}\le\eps(\MM,r)$ and if $\|G-\MM\|_\MM\le r$, 
then
$$
\|\EE(G)-\MM\|_\MM\le\eps(\MM,r)+2\nu(\MM)(r^2+2r)=r\,.
$$

Moreover, if $F,G\in\overline{B(\MM,r)}\subset\XX_\MM$, then
$$
\begin{aligned}
\|\EE(F)-\EE(G)\|_\MM&\le\|\CC(F,F)-\CC(G,G)\|_\MM\le 2\nu(\MM)\|F+G\|_\MM\|F-G\|_\MM
\\
&\le 4\nu(\MM)(\|\tfrac12(F+G)-\MM\|_\MM+\|\MM\|_\MM)\|F-G\|_\MM
\\
&=4\nu(\MM)(r+1)\|F-G\|_\MM\,.
\end{aligned}
$$
Since $4\nu(\MM)(r+1)<1$, the map $\EE$ is a strict contraction from the closed ball $\overline{B(\MM,r)}$ of the Banach space $\XX_\MM$ into itself, and therefore has a unique fixed point in that ball. In other words, the Boltzmann 
equation with initial data $F^\init$ has a unique mild solution which belongs to the ball $\overline{B(\MM,r)}\subset\XX_\MM$. 

The map $r\mapsto\eps(\MM,r)$ is increasing on $(0,\frac1{4\nu(\MM)}-1)$ and 
$$
\sup_{0<4\nu(\MM)(r+1)<1}\eps(\MM,r)=\frac{(1-4\nu(\MM))^2}{8\nu(\MM)}\,.
$$
Thus, for each $F^\init\in\YY_{\MM(0)}$ such that $|F^\init-\MM(0)|_{\MM(0)}<\frac{(1-4\nu(\MM))^2}{8\nu(\MM)}$, there exists a unique $r~\in[0,\frac1{4\nu(\MM)}-1)$ such that $\eps(\MM,r)=|F^\init-\MM(0)|_{\MM(0)}$, which is given
by 
$$
r=\left(\frac1{4\nu(\MM)}-1\right)\left(1-\sqrt{1-\frac{8\nu(\MM)|F^\init-\MM(0)|_{\MM(0)}}{(1-4\nu(\MM))^2}}\right)\,.
$$
This proves statement (a).

If $\tfrac12\le 4\nu(\MM)<1$, then $r\le\frac1{4\nu(\MM)}-1\le 1$. If $0< 4\nu(\MM)<\tfrac12$ and $r\in[0,\frac1{4\nu(\MM)}-1]$, then $\eps(\MM,r)\le\eps(\MM,1)=1-6\nu(\MM)$ if and only if $0\le r\le 1$, so that $\|F-\MM\|_\MM\le 1$ if 
$|F^\init-\MM(0)|_{\MM(0)}\le 1-6\nu(\MM)$. This proves statement (b).
\end{proof}

\begin{proof}[Proof of Theorem \ref{T-Stab}]
One has
$$
(F_1-F_2)(v,x,t)=(F_1^\init-F_2^\init)(v,x-tv,t)+(\CC(F_1,F_1)-\CC(F_2,F_2))(v,x,t)
$$
so that
$$
\begin{aligned}
\|F_1-F_2\|_\MM\le\|(F_1^\init-F_2^\init)/\MM(0)\|_{L^\infty(\RDRD)}+\|\CC(F_1,F_1)-\CC(F_2,F_2)\|_\MM
\\
\le|F_1^\init-F_2^\init|_{\YY_{\MM(0)}}+2\nu(\MM)\|F_1+F_2\|_\MM\|F_1-F_2\|_\MM
\\
\le|F_1^\init-F_2^\init|_{\YY_{\MM(0)}}+4\nu(\MM)(1+r)\|F_1-F_2\|_\MM\,,
\end{aligned}
$$
assuming that $\|F_j-\MM\|_\MM\le r$ for $j=1,2$. Therefore
$$
\|F_1-F_2\|_\MM\le\frac{|F_1^\init-F_2^\init|_{\YY_{\MM(0)}}}{1-4\nu(\MM)(r+1)}\,.
$$
Inserting the expression of $r$ in terms of $|F^\init-\MM(0)|_{\MM(0)}$ obtained in Theorem \ref{T-Etern} (a) in the right hand side of this inequality leads to the inequality of Theorem \ref{T-Stab}.
\end{proof}

\begin{proof}[Proof of Theorem \ref{T-StabSoft}]
Since $F_j$ is a mild solution on $\RDRDR$ of the Boltzmann equation for $j=1,2$, one has
$$
e^{t\ASf}F_j(t)=F_j(0)+\int_0^{t}e^{s\ASf}\BB(F_j(s),F_j(s))\,\ds
$$
for each $t\in\R$. The inequalities (\ref{BB-<}) and (\ref{BB+<}) and the bound on $F_j$ assumed in the statement of Theorem \ref{T-121} imply that
\begin{equation}\label{BF1-BF2}
\begin{aligned}
|\BB(F_1(s),F_1(s))-\BB(F_2(s),F_2(s))|\le&\tfrac12|\BB(F_1(s)+F_2(s),F_1(s)-F_2(s))|
\\
&+\tfrac12|\BB(F_1(s)-F_2(s),F_1(s)+F_2(s))|
\\
\le&4|F_1(s)-F_2(s)|_{\MM(s)}\AA(\MM(s))\MM(s)\,,
\end{aligned}
\end{equation}
so that
$$
\begin{aligned}
|F_1(t)-F_2(t)|_{\MM(t)}&=|e^{t\ASf}(F_1(t)-F_2(t))|_{\MM(0)}
\\
&\le|F_1(0)-F_2(0)|_{\MM(0)}+\int_0^t|e^{s\ASf}(\BB(F_1,F_1)(s)-\BB(F_2,F_2)(s))|_{\MM(0)}\,\ds
\\
&=|F_1(0)-F_2(0)|_{\MM(0)}+\int_0^t|\BB(F_1,F_1)(s)-\BB(F_2,F_2)(s)|_{\MM(s)}\,\ds
\\
&\le|F_1(0)-F_2(0)|_{\MM(0)}+4\int_0^t|F_1(s)-F_2(s)|_{\MM(s)}\|\AA(\MM(s))\|_{L^\infty(\RDRD)}\,\ds\,.
\end{aligned}
$$

Using Lemma \ref{L-Mu} and applying Gronwall's inequality shows that
$$
|F_1(t)-F_2(t)|_{\MM(t)}\le|F_1(0)-F_2(0)|_{\MM(0)}\exp\left(4\left|\int_0^t\|\AA(\MM(s))\|_{L^\infty(\RDRD)}\ds\right|\right)\,,
$$
from which the announced conclusion immediatly follows.
\end{proof}

\section{Large time behavior: proofs of Theorems \ref{T-ExistLim} and \ref{T-121}}

\begin{proof}[Proof of Theorem \ref{T-ExistLim}]
Observe first that
$$
\int_\RD\AA(\MM)\MM(v,x,t)\dv=C_\aSf\theta(t)^{\frac{\beta}2}\rho(x,t)^2\,,
$$
where
$$
C_\aSf=\iint_\RDRD|w-w_*|^\beta M[1,0,1](w)M[1,0,1](w_*)\dee w\dee w_*<\infty
$$
since $\beta\in(1-\D,2]$. On the other hand
$$
\int_\RD\rho(x,t)^2\,\dx=m^2\theta(t)^\D\det(\tfrac{Q}{2\pi})\int_\RD\exp(-\theta(t)x^TQx))\dx
=
m^2\sqrt{\det(\tfrac{Q}{4\pi})}\theta(t)^{\frac{\D}2}
$$
so that
$$
\iiint_\RDRDR\AA(\MM)\MM(v,x,t)\dv\dx\dt=C_\aSf m^2\sqrt{\det(\tfrac{Q}{4\pi})}\int_\R\theta(t)^{\frac{\D+\beta}2}\dt<\infty
$$
since $\theta(t)=O(1/t^2)$ as $t\to\pm\infty$ (as implied by (\ref{theta=})) and $\frac{\D+\beta}2>\tfrac12$.

Thus, if $F$ is a mild solution of the Boltzmann equation on $\RDRD\CROSS I$ (where $I$ is some interval of $\R$) satisfying the bound in Theorem \ref{T-ExistLim}, one has
\begin{equation}\label{B(F)<A(M)M}
\begin{aligned}
|\BB(F,F)|&\le\BB_+(|F|,|F|)+\BB_-(|F|,|F|)
\\
&\le\BB_+(\MM,\MM)+\BB_-(\MM,\MM)=2\AA(\MM)\MM\,,
\end{aligned}
\end{equation}
and therefore
$$
\iiint_{\RDRD\CROSS I}|\BB(F,F)(v,x,t)|\dv\dx\dt\le 2\iiint_\RDRDR\AA(\MM)\MM(v,x,t)\dv\dx\dt<\infty\,.
$$

If $I=[t_0,+\infty)$, then
$$
e^{(t-t_0)\ASf}F(t)=F(t_0)+\int_{t_0}^te^{(s-t_0)\ASf}\BB(F(s),F(s))\dee s\,,
$$
and since
$$
\int_{t_0}^{+\infty}\|e^{(s-t_0)\ASf}\BB(F(s),F(s))\|_{L^1(\RDRD)}\dee s=\int_{t_0}^{+\infty}\|\BB(F(s),F(s))\|_{L^1(\RDRD)}\dee s<\infty\,,
$$
we conclude that $e^{(t-t_0)\ASf}F(t)$ converges to a limit in $L^1(\RDRD)$ as $t\to+\infty$. The case where $I=(-\infty,t_0]$ is handled similarly.
\end{proof}

\begin{proof}[Proof of Theorem \ref{T-121}]
Since $F_j$ is a mild solution on $\RDRD\CROSS[t_0,+\infty)$ of the Boltzmann equation for $j=1,2$, one has
$$
e^{(t'-t_0)\ASf}F_j(t')=e^{(t-t_0)\ASf}F_j(t)+\int_t^{t'}e^{(s-t_0)\ASf}\BB(F_j(s),F_j(s))\,\ds
$$
for each $t'>t>t_0$. Letting $t'\to+\infty$ and using the assumption on the large time behavior of $F_1$ and $F_2$ shows that
$$
\begin{aligned}
e^{(t-t_0)\ASf}(F_1(t)-F_2(t))=F_1^{+\infty}-F_2^{+\infty}-\int_t^{+\infty}e^{(s-t_0)\ASf}(\BB(F_1(s),F_1(s))-\BB(F_2(s),F_2(s)))\,\ds\,.
\end{aligned}
$$
The inequality (\ref{BF1-BF2}) and the bound on $F_j$ assumed in the statement of Theorem \ref{T-121} imply that
$$
|\BB(F_1(s),F_1(s))-\BB(F_2(s),F_2(s))|\le 4|F_1(s)-F_2(s)|_{\MM(s)}\AA(\MM(s))\MM(s))\,,
$$
so that
$$
\begin{aligned}
|F_1(t)-F_2(t)|_{\MM(t)}&=|e^{(t-t_0)\ASf}(F_1(t)-F_2(t))|_{\MM(t_0)}
\\
&\le|F_1^{+\infty}-F_2^{+\infty}|_{\MM(t_0)}+4\int_t^{+\infty}|F_1(s)-F_2(s)|_{\MM(s)}\|\AA(\MM(s))\|_{L^\infty(\RDRD)}\dee s
\end{aligned}
$$
in view of the obvious identity $e^{(s-t_0)\ASf}\MM(s)=\MM(t_0)$. The conclusion follows from Lemma \ref{L-Mu} and Lemma \ref{L-Gronw} below.
\end{proof}

\smallskip
The second lemma is a variant of the Gronwall inequality.

\begin{Lem}\label{L-Gronw}
Let $\phi\in L^\infty([t_0,+\infty))$ satisfy the integral inequality
$$
0\le\phi(t)\le\Delta+\int_t^{+\infty}\phi(s)m(s)\ds\,,\quad\hbox{ for a.e. }t>t_0\,,
$$
where $\Delta>0$ and $m$ is a measurable function on $[t_0,+\infty)$ satisfying
$$
m(t)>0\hbox{ for a.e. }t\ge t_0\,,\quad\hbox{Êand }\int_{t_0}^{\infty}m(s)\ds<\infty\,.
$$
Then 
$$
\phi(t)\le\Delta\exp\left(\int_{t_0}^{+\infty}m(s)\ds\right)
$$
for a.e. $t\ge t_0$.
\end{Lem}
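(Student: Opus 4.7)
The plan is to reduce the integral inequality to a differential inequality for a suitable antiderivative. Define
$$
\psi(t):=\Delta+\int_t^{+\infty}\phi(s)m(s)\,\ds\,,\qquad t\ge t_0\,.
$$
Since $\phi\in L^\infty([t_0,+\infty))$ and $m\in L^1([t_0,+\infty))$, the product $\phi m$ belongs to $L^1([t_0,+\infty))$, so $\psi$ is absolutely continuous on $[t_0,+\infty)$, satisfies $\psi(t)\ge\Delta>0$ for every $t$, and
$$
\psi'(t)=-\phi(t)m(t)\quad\hbox{ for a.e. }t\ge t_0\,.
$$
The hypothesis $0\le\phi(t)\le\psi(t)$ (valid a.e.) together with $m(t)>0$ a.e. yields the differential inequality
$$
\psi'(t)\ge-\psi(t)m(t)\quad\hbox{ for a.e. }t\ge t_0\,.
$$

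Next I would divide by the strictly positive function $\psi$ to get $(\log\psi)'(t)\ge-m(t)$ a.e., and integrate from $t$ to $T$ for arbitrary $T>t\ge t_0$ (the chain rule applies because $\psi$ is absolutely continuous and bounded below away from $0$). This gives
$$
\log\psi(T)-\log\psi(t)\ge-\int_t^Tm(s)\,\ds\,,
$$
i.e.,
$$
\psi(t)\le\psi(T)\exp\!\Bigl(\int_t^Tm(s)\,\ds\Bigr)\,.
$$
Alternatively, one can use the integrating factor $e^{\int_{t_0}^tm(s)\,\ds}$ and argue that $t\mapsto e^{\int_{t_0}^tm(s)\,\ds}\psi(t)$ is nondecreasing; both routes amount to the same computation.

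The final step is to let $T\to+\infty$. Since $\phi\in L^\infty$ and $m\in L^1([t_0,+\infty))$, the dominated convergence theorem gives $\int_T^{+\infty}\phi(s)m(s)\,\ds\to 0$, hence $\psi(T)\to\Delta$. Combined with $\int_t^Tm(s)\,\ds\nearrow\int_t^{+\infty}m(s)\,\ds$ and the bound $\phi(t)\le\psi(t)$, this yields
$$
\phi(t)\le\Delta\exp\!\Bigl(\int_t^{+\infty}m(s)\,\ds\Bigr)\le\Delta\exp\!\Bigl(\int_{t_0}^{+\infty}m(s)\,\ds\Bigr)\,,
$$
as required. The only nontrivial technical point is the justification of the chain rule for $\log\psi$ when $\psi$ is only absolutely continuous; this is standard once one observes that $\psi\ge\Delta>0$, so the composition with $\log$ preserves absolute continuity, and no serious obstacle is anticipated.
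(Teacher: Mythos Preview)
Your proof is correct and follows essentially the same approach as the paper's. Both arguments introduce the function $\psi(t)=\Delta+\int_t^{+\infty}\phi(s)m(s)\,\ds$, use the hypothesis $\phi\le\psi$ together with $m>0$ to bound the logarithmic derivative $(\ln\psi)'\ge -m$, and integrate. The only cosmetic difference is that the paper integrates directly on $[\tau,+\infty)$ and evaluates the boundary term at $+\infty$ in one stroke, whereas you integrate on $[t,T]$ and then pass to the limit $T\to+\infty$; your version is slightly more explicit about why that limit is justified, but the underlying computation is identical.
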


\begin{proof}
One has
$$
\frac{\phi(t)m(t)}{\displaystyle\Delta+\int_t^\infty\phi(s)m(s)\,\ds}\le m(t)\,,
$$
so that
$$
\left[-\ln\left(\Delta+\int_t^\infty\phi(s)m(s)\ds\right)\right]_{t=\tau}^{+\infty}
=
\int_{\tau}^{+\infty}\frac{\phi(t)m(t)\dt}{\displaystyle\Delta+\int_t^\infty\phi(s)m(s)\ds}
\le
\int_\tau^{+\infty}m(t)\dt\,.
$$
Taking the exponential of both sides of this inequality, one finds that
$$
\Delta+\int_\tau^\infty\phi(s)m(s)\ds\le\Delta\exp\left(\int_\tau^{+\infty}m(s)\ds\right)\,,
$$
from which the announced inequality immediatly follows.
\end{proof}

\section{The operators $\TT^\pm$: proofs of Theorems \ref{T-TTpm} and \ref{T-ContTT}}

Going back to the proof of Theorem \ref{T-ExistLim} shows that $\TT^+F^\init_+=F^{+\infty}$ if and only if there exists a unique mild solution $F$ of the Boltzmann equation such that
$$
F^{+\infty}=e^{t\ASf}F(t)+\int_t^{+\infty}e^{s\ASf}\BB(F(s),F(s))\,\ds=0\,,\quad\hbox{Êfor all }t\in\R\,,
$$
or equivalently
$$
f(t)=F^{+\infty}-\int_t^{+\infty}e^{s\ASf}\BB(e^{-s\ASf}f(s),e^{-s\ASf}f(s))\,\ds=0\,,\quad\hbox{Êfor all }t\in\R\,,
$$
with $f(t):=e^{t\ASf}F(t)$. This is an equation for the unknown $f$, which is put in the form
$$
f=F^{+\infty}-\FF_+(f)\,,
$$
with
\begin{equation}\label{DefFF+}
\FF_+(f)(v,x,t):=\int_t^{+\infty}e^{s\ASf}\BB(e^{-s\ASf}f,e^{-s\ASf}f)(v,x,s)\,\ds\,.
\end{equation}

\begin{proof}[Proof of Theorem \ref{T-TTpm}]
Proceeding as in the proof of Lemma \ref{L-CCpm}, we see that, for each $f,g\in\XX_\MM$, one has
\begin{equation}\label{LipFF+}
\|\FF_+(f)-\FF_+(g)\|_\MM\le 2\nu(\MM)\|f+g\|_\MM\|f-g\|_\MM\,,
\end{equation}
and
\begin{equation}\label{BndFF+}
\|\FF_+(f)\|_\MM\le 2\nu(\MM)(2+\|f-\MM\|_\MM)\|f-\MM\|_\MM\,.
\end{equation}
Thus 
$$
\begin{aligned}
\|F^{+\infty}-\FF_+(f)-\MM\|_\MM&\le|F^{+\infty}-\MM|_{\MM(0)}+\|\FF_+(f)\|_\MM
\\
&\le\eps(\MM,r)+2\nu(\MM)(2+r)r=r
\end{aligned}
$$
provided that $|F^{+\infty}-\MM|_{\MM(0)}\le\eps(\MM,r):=(1-4\nu(\MM)(1+\tfrac12r))r$ and $\|f-\MM\|_\MM\le r$. In other words the map $f\mapsto F^{+\infty}-\FF_+(f)$ sends the closed ball $\overline{B_{\XX_\MM}(\MM,r)}$ into itself 
provided that $|F^{+\infty}-\MM|_{\MM(0)}\le\eps(\MM,r)$.

On the other hand, if $f,g\in\overline{B_{\XX_\MM}(\MM,r)}$
\begin{equation}\label{DefFF-}
\|\FF_+(f)-\FF_+(g)\|_\MM\le 2\nu(\MM)\|f+g\|_\MM\|f-g\|_\MM\le 4\nu(\MM)(1+r)\|f-g\|_\MM\,,
\end{equation}
so that the map $f\mapsto F^{+\infty}-\FF_+(f)$ is a strict contraction on the closed ball $\overline{B_{\XX_\MM}(\MM,r)}$ provided that $4\nu(\MM)(1+r)<1$. By the Banach fixed point theorem, the map $f\mapsto F^{+\infty}-\FF_+(f)$ 
has a unique fixed point $f\in\overline{B_{\XX_\MM}(\MM,r)}$. 

Thus $F(t)=e^{-t\ASf}f(t)$ is a mild solution of the Boltzmann equation on $\RDRDR$ such that $e^{t\ASf}F(t)\to F^{+\infty}$ in $L^1(\RDRD)$ as $t\to+\infty$. Setting $F^\init_+:=F(0)$, one has $\TT^+F^\init_+=F^{+\infty}$.

Arguing as in the proof of Theorem \ref{T-Etern},
$$
|F^{+\infty}-\MM(0)|_{\MM(0)}<\tfrac{(1-4\nu(\MM))^2}{8\nu(\MM)}\Rightarrow\|f-\MM\|_\MM=\|F-\MM\|_\MM\le r\,,
$$
where $r$ is the unique element of $[0,\frac1{4\nu(\MM)}-1)$ such that $\eps(\MM,r)=|F^{+\infty}-\MM(0)|_{\MM(0)}$, i.e.
$$
r=\left(\frac1{4\nu(\MM)}-1\right)\left(1-\sqrt{1-\frac{8\nu(\MM)|F^{+\infty}-\MM(0)|_{\MM(0)}}{(1-4\nu(\MM))^2}}\right)\,.
$$
In particular $|F^\init_+-\MM(0)|_{\MM(0)}\le\|F-\MM\|_\MM\le r$.

By looking instead for a fixed point of the map $f\mapsto F^{-\infty}+\FF_-(f)$, where
$$
\FF_-(f)(v,x,t):=\int_{-\infty}^te^{s\ASf}\BB(e^{-s\ASf}f,e^{-s\ASf}f)(v,x,s)\,\ds\,,
$$
one obtains $F^\init_-$ such that $\TT^-F^\init_-=F^{-\infty}$ in the same way. Since $\FF_-$ satisfies the same estimates (\ref{LipFF+}) and (\ref{BndFF+}) as $\FF_+$, i.e.
\begin{equation}\label{LipFF-}
\|\FF_-(f)-\FF_-(g)\|_\MM\le 2\nu(\MM)\|f+g\|_\MM\|f-g\|_\MM\,,
\end{equation}
and
\begin{equation}\label{BndFF-}
\|\FF_-(f)\|_\MM\le 2\nu(\MM)(2+\|f-\MM\|_\MM)\|f-\MM\|_\MM\,.
\end{equation}
one obtains the existence and uniqueness of $F^\init_-$ satisfying the condition $|F^\init_--\MM(0)|_{\MM(0)}\le r$, provided that $|F^{-\infty}-\MM(0)|_{\MM(0)}<\frac{(1-4\nu(\MM))^2}{8\nu(\MM)}$.
\end{proof}

\smallskip
The proof of Theorem \ref{T-ContTT} (a) is based on Theorem \ref{T-Stab}, while Theorem \ref{T-ContTT} (b) follows from the classical argument proving the continuous dependence of the fixed point on the initial data.

\begin{proof}[Proof of Theorem \ref{T-ContTT}]
Since $\nu(\MM)<\tfrac14$ and $F_1^\init,F_2^\init\in\overline{B_{\YY_{\MM(0)}}(\MM(0),\eps)}$ with $0\le\eps<\frac{(1-4\nu(\MM))^2}{8\nu(\MM)}$, applying Theorem \ref{T-Stab} shows that
$$
|e^{t\ASf}F_1(t)-e^{t\ASf}F_2(t)|_{\MM(0)}\le\|F_1-F_2\|_\MM\le\frac{|F^\init_1-F^\init_2|_{\MM(0)}}{\sqrt{(1-4\nu(\MM))^2-8\nu(\MM)\eps}}
$$
for each $t\in\R$, where $F_j(t)=S_tF^\init_j$ for $j=1,2$. Observing that $e^{t\ASf}F_j(t)\to\TT^\pm F^\init_j$ in $L^1(\RDRD)$ as $t\to\pm\infty$, we conclude that
$$
|\TT^\pm F^\init_1-\TT^\pm F^\init_2|_{\MM(0)}\le\frac{|F^\init_1-F^\init_2|_{\MM(0)}}{\sqrt{(1-4\nu(\MM))^2-8\nu(\MM)\eps}}\,,
$$
which is statement (a).

If $F_1^{+\infty},F_2^{+\infty}\in\overline{B_{\YY_{\MM(0)}}(\MM(0),\eps)}$ with $0\le\eps<\frac{(1-4\nu(\MM))^2}{8\nu(\MM)}$, applying Theorem \ref{T-TTpm} shows that $F_j^{+\infty}=\TT^+F_j^\init$ for $j=1,2$, where $F_j^{\init}=f_j(0)$
and $f_j$ satisfies 
$$
f_j=F^{+\infty}_j-\FF_+(f_j)\,,\quad\hbox{ and }\|f_j-\MM\|_\MM\le r\,,\quad j=1,2
$$
where $\FF_+$ is defined in (\ref{DefFF+}). Because of (\ref{LipFF+})
$$
\begin{aligned}
\|f_1-f_2\|_\MM&\le|F^{+\infty}_1-F^{+\infty}_2|_{\MM(0)}+\|\FF_+(f_1)-\FF_+(f_2)\|_\MM
\\
&\le|F^{+\infty}_1-F^{+\infty}_2|_{\MM(0)}+4\nu(\MM)(1+r)\|f_1-f_2\|_\MM\,,
\end{aligned}
$$
so that
$$
|F^\init_1-F^\init_2|_{\MM(0)}\le\|f_1-f_2\|_\MM\le\frac{|F^{+\infty}_1-F^{+\infty}_2|_{\MM(0)}}{1-4\nu(\MM)(1+r)}\,.
$$
Inserting  the expression of $r$ in terms of $|F^{+\infty}-\MM(0)|_{\MM(0)}$ given in Theorem \ref{T-TTpm} in the right hand side of this inequality leads to the estimate in statement (b) for the operator $(\TT^+)^{-1}$. The analogous
estimate for $(\TT^-)^{-1}$ is obtained by the same argument involving $\FF_-$ defined in (\ref{DefFF-}) instead of $\FF_+$.
\end{proof}

\section{The scattering operator $\SS$: proof of Theorem \ref{T-SSprop}}

If $F$ is a mild solution of the Boltzmann equation on $\RDRDR$, 
$$
e^{t'\ASf}F(t')=e^{t\ASf}F(t)+\int_t^{t'}e^{s\ASf}\BB(F(s),F(s))\,\dx
$$
for each $t,t'\in\R$. In terms of $f(t)=e^{t\ASf}F(t)$, the equality above is recast as
$$
f(t')=f(t)+\int_t^{t'}e^{s\ASf}\BB(e^{-s\ASf}f(s),e^{-s\ASf}f(s))\,\ds\,.
$$
According to Definition \ref{D-SS}, one has $F^{+\infty}=\SS F^{-\infty}$ if the function $s\mapsto e^{s\ASf}\BB(e^{-s\ASf}f(s),e^{-s\ASf}f(s))$ is integrable on $\R$ a.e. on $\RDRD$ and
$$
F^{+\infty}=F^{-\infty}+\int_\R e^{s\ASf}\BB(e^{-s\ASf}f(s),e^{-s\ASf}f(s))\,\ds\,.
$$

\begin{proof}[Proof of Theorem \ref{T-SSprop}]
Since the functional $\FF_-$ defined in (\ref{DefFF-}) satisfies the estimates (\ref{LipFF-}) and (\ref{BndFF-}), arguing as in the proof of Theorem \ref{T-Etern} shows that the map $f\mapsto F^{-\infty}+\FF_-(f)$ sends the closed ball
$\overline{B_\XX(\MM,r)}$ into itself provided that $|F^{-\infty}-\MM|_{\MM(0)}\le\eps(\MM,r)$, where we recall that $\eps(\MM,r)=(1-4\eps(\MM)(1+\tfrac12r))r$. Since $4\nu(\MM)(1+r)<1$, one has $\eps(\MM,r)>0$. Moreover 
$$
\|\FF_-(f)-\FF_-(g)\|_\MM\le 4\nu(\MM)(1+r)\|f-g\|_\MM\quad\hbox{ for all }f,g\in\overline{B_\XX(\MM,r)}
$$
by (\ref{LipFF-}). By the Banach fixed point theorem, the map $f\mapsto F^{-\infty}+\FF_-(f)$ has a unique fixed point in $\overline{B_\XX(\MM,r)}$. The distribution function $F(t):=e^{-t\ASf}f(t)$ is therefore a mild solution of the 
Boltzmann equation on $\RDRDR$ and satisfies the bound $\|F-\MM\|_\MM\le r$, so that $|F(v,x,t)|\le(1+r)\MM(v,x,t)$ for a.e. ($v,x,t)\in\RDRDR$. By Theorem \ref{T-ExistLim}, there exists a unique $F^{+\infty}\in\YY_{\MM(0)}$ 
such that $f(t)=e^{t\ASf}F(t)\to F^{+\infty}$ as $t\to+\infty$, which means that $\SS F^{-\infty}=F^{+\infty}$. Besides $|F^{+\infty}-\MM(0)|_{\MM(0)}\le\|F-\MM\|_\MM\le r$. This defines the scattering map $\SS$ on the closed ball
$\overline{B_{\YY_{\MM(0)}}(\MM,\eps(\MM,r))}$ and shows that $\SS(\overline{B_{\YY_{\MM(0)}}(\MM,\eps(\MM,r)))}\subset\overline{B_{\YY_{\MM(0)}}(\MM,r)}$ provided that $4\nu(\MM)(1+r)<1$. 

Arguing as in the proof of Theorem \ref{T-Etern}, we conclude that $\SS$ is defined on the open ball $B_{\YY_{\MM(0)}}(\MM,\tfrac{(1-4\nu(\MM))^2}{8\nu(\MM)})$ and that $|\SS F^{-\infty}-\MM|_{\MM(0)}\le r$ where 
$$
r=\left(\frac1{4\nu(\MM)}-1\right)\left(1-\sqrt{1-\frac{8\nu(\MM)|F^{+\infty}-\MM(0)|_{\MM(0)}}{(1-4\nu(\MM))^2}}\right)\,.
$$
i.e. $r$ is the unique element of $[0,\frac1{4\nu(\MM)}-1)$ such that $\eps(\MM,r)=|F^{-\infty}-\MM(0)|_{\MM(0)}$. This proves statement (a).

Statement (b) is obtained in exactly the same manner, by seeking a fixed point of the map $f\mapsto F^{+\infty}-\FF_+(f)$, where $\FF_+$ is the map defined in (\ref{DefFF+}).

If $F_1^{-\infty},F_2^{-\infty}\in\overline{B_{\YY_{\MM(0)}}(\MM(0),\eps)}$ with $0\le\eps<\frac{(1-4\nu(\MM))^2}{8\nu(\MM)}$, there exists unique elements $f_1,f_2$ of $\XX_\MM$ such that 
$$
f_j=F^{-\infty}_j+\FF_-(f_j)\,,\quad\hbox{ and }\|f_j-\MM\|_\MM\le r\,,\quad j=1,2\,.
$$
Because of (\ref{LipFF-})
$$
\begin{aligned}
\|f_1-f_2\|_\MM&\le|F^{-\infty}_1-F^{-\infty}_2|_{\MM(0)}+\|\FF_-(f_1)-\FF_-(f_2)\|_\MM
\\
&\le|F^{-\infty}_1-F^{-\infty}_2|_{\MM(0)}+4\nu(\MM)(1+r)\|f_1-f_2\|_\MM\,,
\end{aligned}
$$
so that
$$
|\SS F ^{-\infty}_1-\SS F^{-\infty}_2|_{\MM(0)}\le\|f_1-f_2\|_\MM\le\frac{|F^{-\infty}_1-F^{-\infty}_2|_{\MM(0)}}{1-4\nu(\MM)(1+r)}\,.
$$
The analogous local Lipschitz continuity estimate for the map $\SS^{-1}$ is obtained by the same arguments involving $\FF_+$ instead of $\FF_-$. This proves statement (c).

Let $a,c,l,m\in\R$ and $b,p,q\in\RD$. For each $F^{-\infty}\in B_{\YY_{\MM(0)}}(\MM(0),\tfrac{(1-4\nu(\MM))^2}{8\nu(\MM)})$, let $f$ be the unique fixed point of the map $f\mapsto F^{-\infty}+\FF_-(f)$ in the closed ball 
$\overline{B_{\XX_\MM}(0,r)}$ where 
$$
r=\left(\frac1{4\nu(\MM)}-1\right)\left(1-\sqrt{1-\frac{8\nu(\MM)|F^{+\infty}-\MM(0)|_{\MM(0)}}{(1-4\nu(\MM))^2}}\right)\,.
$$
Thus $F(t)=e^{-t\ASf}f(t)$ is a mild solution of the Boltzmann equation. By Theorem B (b), the distribution function $F$ satisfies the global conservation law
$$
\frac{d}{dt}\iint_{\RDRD}(a+b\cdot v+c|v|^2+p\cdot(x\wedge v)+q\cdot(x\!-\!tv)+l(x\!-\!tv)\cdot v+m|x\!-\!tv|^2)F(v,x,t)\,\dv\dx=0\,.
$$
Equivalently, $f(t)=e^{t\ASf}F(t)$ satisfies
$$
\frac{d}{dt}\iint_{\RDRD}(a+b\cdot v+c|v|^2+p\cdot(x\wedge v)+q\cdot x+lx\cdot v+m|x|^2)f(v,x,t)\,\dv\dx=0\,.
$$
Since $f(t)\to F^{-\infty}$ in $L^1(\RDRD)$ as $t\to-\infty$ and $f(t)\to\SS F^{-\infty}$ in $L^1(\RDRD)$ as $t\to+\infty$ and $\|f\|_\MM\le 1+r$, one has
$$
\begin{aligned}
\iint_{\RDRD}(a+b\cdot v+c|v|^2+p\cdot(x\wedge v)+q\cdot x+lx\cdot v+m|x|^2)\SS F^{-\infty}(v,x)\,\dv\dx&
\\
=
\iint_{\RDRD}(a+b\cdot v+c|v|^2+p\cdot(x\wedge v)+q\cdot x+lx\cdot v+m|x|^2)F^{-\infty}(v,x)\,\dv\dx&\,,
\end{aligned}
$$
which is obviously equivalent to statement (d).

Let $F$ be the mild solution of the Boltzmann equation on $\RDRDR$ such that 
\begin{equation}\label{ScattF}
\|e^{t\ASf}F(t)-F^{\pm\infty}\|_{L^1(\RDRD)}\to 0\hbox{ as }t\to\pm\infty
\end{equation}
with $F^{+\infty}=\SS F^{-\infty}$, while $\|F-\MM\|_\MM\le r$ with $r$ given by the formula in statement (a). If $\tfrac12<4\nu(\MM)<1$ or if $0<4\nu(\MM)\le\tfrac12$ and $\|F^{-\infty}-\MM\|_\MM<1-6\nu(\MM)$, then $r$ given by 
the expression in statement (a) satisfies $0\le r<1$ so that 
$$
0\le(1-r)\MM(t)\le F(t)\le (1+r)\MM(t)\hbox{Êa.e. on }\RDRD\,,\quad\hbox{Êfor all }t\in\R\,.
$$
By Boltzmann's H Theorem (c), the $t\mapsto H[F(t)]$ is nonincreasing on $\R$ and one has
$$
H[F(t)]-H[F(-t)]=\int_{-t}^t\iint_\RDRD\BB(F(s),F(s))\ln F(s)\,\ds\,,\quad\hbox{ for all }t\ge 0\,.
$$
On the other hand $H[F(t)]=H[e^{t\ASf}F(t)]$, and since $(1-r)\MM(0)\le e^{t\ASf}F(t)\le(1+r)\MM(0)$ a.e. on $\RDRD$ for all $t\in\R$, one has
$$
(1-r)\MM(0)\ln((1-r)\MM(0))\le e^{t\ASf}F(t)\le(1+r)\MM(0)\ln((1+r)\MM(0))\hbox{ a.e. on }\RDRD\,.
$$
Since $\MM(0)$ and $\MM(0)\ln\MM(0)\in L^1(\RDRD)$, we conclude by dominated convergence that
$$
H[F(t)]\to H[F^{-\infty}]\hbox{Êas }t\to-\infty\,,\quad H[F(t)]\to H[\SS F^{-\infty}]\hbox{Êas }t\to+\infty\,,
$$
and
$$
H[\SS F^{-\infty}]-H[F^{-\infty}]=\int_{-\infty}^{+\infty}\iint_\RDRD\BB(F(s),F(s))\ln F(s)\,\dv\dx\ds\le 0\,,
$$
with equality if and only if 
$$
\int_\RD\BB(F,F)(v,x,s)\,dv=0\hbox{ for a.e. }(x,s)\in\RDR\,.
$$
By Boltzmann's H Theorem (b), this implies that $F$ is a.e. equal to a local Maxwellian, i.e. that $F$ is of the form $F(v,x,t)=M[\rho(x,t),u(x,t),\theta(x,t)](v)$ for a.e. $(v,x,t)\in\RDRDR$. Since $F$ is also a mild solution of the Boltzmann
equation, this local Maxwellian must be a global Maxwellian $\tilde\MM$. Hence 
$$
F^{-\infty}=\lim_{t\to-\infty}e^{t\ASf}F(t)=\lim_{t\to-\infty}e^{t\ASf}\tilde\MM(t)=\tilde\MM(0)\,.
$$
This establishes statement (e).

As for statement (f), observe that statement (d) implies that
$$
\iint_{\RDRD}\left(\begin{matrix}1\\ v\\ |v|^2\\ x-tv\\ |x-tv|^2\\ (x-tv)\cdot v\\ x\wedge v\end{matrix}\right)\MM_{F^{-\infty}}(v,x,t)\,\dv\dx
=
\iint_{\RDRD}\left(\begin{matrix}1\\ v\\ |v|^2\\ x\\ |x|^2\\ x\cdot v\\ x\wedge v\end{matrix}\right)\SS F^{-\infty}(v,x)\,\dv\dx\,.
$$
The variational characterization of $\MM_{\SS F^{-\infty}}$ (see the remark following Theorem 1.1 in \cite{Lvrmr}) implies that $\MM_{\SS F^{-\infty}}=\MM_{F^{-\infty}}$, so that $H[\SS F^{-\infty}]\ge H[\MM_{F^{-\infty}}]$, with equality 
if and only if $\SS F^{-\infty}=\MM_{F^{-\infty}(0)}$. Since $\beta\le 0$, applying Theorem \ref{T-121} shows that the only mild solution $F$ of the Boltzmann equation on $\RDRDR$ such that $|F|\le\MM$ a.e. on $\RDRDR$ for some 
global Maxwellian $\MM$ and such that $\|F(t)-e^{-t\ASf}\MM_{F^{-\infty}(0)}\|_{L^1(\RDRD)}\to 0$ as $t\to+\infty$ is $\MM_{F^{-\infty}}$. Hence
$$
F^{-\infty}=\lim_{t\to-\infty}e^{t\ASf}F(t)=\lim_{t\to-\infty}e^{t\ASf}\MM_{F^{-\infty}}(t)=\MM_{F^{-\infty}}(0)\,.
$$
\end{proof}

\section{Conclusion and perspectives}

The main results in this paper bear on the large time behavior of solutions of the Boltzmann equation set in the Euclidean space $\RD$ in the vicinity of global Maxwellians. The fact that both operators $\TT^+$ and $\TT^-$ in 
Definition \ref{D-TTpm} are locally one-to-one and onto is a major difference between the dynamics of the Boltzmann equation in the Euclidean space $\RD$ and in the torus $\TD$, or in any bounded domain with specular 
reflection of the gas molecules at the boundary. The reason for this difference is that the dispersion effect induced by the streaming operator $v\DOT\GRAD$ in the Euclidean space $\RD$ quenches the dissipation effect of 
the Boltzmann collision integral in the large time limit. The present paper uses the Banach fixed point theorem and provides a complete discussion of the large time limit in the case of solutions of the Cauchy problem for the 
Boltzmann equation that are sufficiently close to a global Maxwellian $\MM$, which is in turn assumed ``small'' enough, in the sense that $\nu(\MM)<\tfrac14$. Whether the same asymptotic behavior of the dynamics defined by 
the Boltzmann equation --- especially the fact that the operators $\TT^+$ and $\TT^-$ are onto --- can be established in a more general setting remains an open problem at the time of this writing. Notice however that $\TT^+$ 
and $\TT^-$ are already known to be one-to-one wherever they are defined without smallness assumption on the initial data (Theorem \ref{T-121}), at least in the case of cutoff collision kernels corresponding to soft potentials.


\begin{appendix}

\section{Properties of mild solutions of the Boltzmann equation:\\ Proofs of Theorems B (b) and of H Theorem (c)}


\begin{proof}[Proof of Theorem B (b)]
Let 
$$
\phi(v,x,t):=(a+b\cdot v+c|v|^2+p\cdot(x\wedge v)+q\cdot(x\!-\!tv)+l(x\!-\!tv)\cdot v+m|x\!-\!tv|^2)
$$
where $a,c,l,m\in\R$ and $p,q\in\RD$. Obviously
$$
\phi(v,x+tv,t)=\phi(v,x,0)\quad\hbox{ for each }(v,x,t)\in\RDRDR\,.
$$
Therefore
$$
e^{t_2\ASf}(\phi F)(v,x,t_2)-e^{t_1\ASf}(\phi F)(v,x,t_1)+\int_{t_1}^{t_2}e^{s\ASf}(\phi\BB(F,F))(v,x,s)\,\ds
$$
for a.e. $(v,x)\in\RDRD$ and $t_1,t_2\in I$. Since $0\le F\le\MM$ a.e. on $\RDRD\times I$, one has $\phi F\in L^1(\RDRD\times I)$, and $|\BB(F,F)|\le\AA(\MM)\MM$ a.e. on $\RDRD\times I$ by (\ref{B(F)<A(M)M}). Using 
(\ref{AA(MM)=}), (\ref{aSf=}), (\ref{rho-u=}) and (\ref{theta=}), we conclude that $\phi\BB(F,F)\in L^1(\RDRD\times I)$. Therefore
$$
\begin{aligned}
\iint_\RDRD e^{t_2\ASf}(\phi F)(v,x,t_2)\,\dv\dx=&\iint_\RDRD e^{t_1\ASf}(\phi F)(v,x,t_1)\,\dv\dx\\
&+\int_{t_1}^{t_2}\iint_\RDRD e^{s\ASf}(\phi\BB(F,F))(v,x,s)\,\dv\dx\ds\,,
\end{aligned}
$$
or equivalently
$$
\begin{aligned}
\iint_\RDRD(\phi F)(v,x,t_2)\,\dv\dx=&\iint_\RDRD(\phi F)(v,x,t_1)\,\dv\dx\\
&+\int_{t_1}^{t_2}\iint_\RDRD(\phi\BB(F,F))(v,x,s)\,\dv\dx\ds\,,
\end{aligned}
$$
since 
$$
\iint_\RDRD e^{t\ASf}g(v,x)\,\dv\dx=\iint_\RDRD g(v,x+tv)\,\dv\dx=\iint_\RDRD g(v,y)\,\dv\dee y
$$
for each $g\in L^1(\RDRD)$ and each $t\in\R$. Finally
$$
\int_\RD(\phi\BB(F,F))(v,x,s)\,\dv=0\quad\hbox{ for a.e. }(x,s)\in\RD\times I
$$
because $v\mapsto\phi(v,x,s)$ is a linear combination of $1,v_1,\ldots,v_\D,|v|^2$ (see statement (a) in Theorem B). Hence
$$
\iint_\RDRD(\phi F)(v,x,t_2)\,\dv\dx=\iint_\RDRD(\phi F)(v,x,t_1)\,\dv\dx
$$
for all $t_1,t_2\in\R$.
\end{proof}

\begin{proof}[Proof of H Theorem (c)]
By definition, if $F$ is a mild solution of the Boltzmann equation on $\RDRD\times I$, for a.e. $(v,x)\in\RDRD$, the function $t\mapsto F(v,x,t)$ is absolutely continuous on $I$. Assuming that $\alpha\MM\le F\le\MM$ a.e. 
on $\RDRD\times I$ implies in particular that $F>0$ a.e., so that the chain rule applies (see for instance Corollary VIII.10 in \cite{BrezisAF}) and
$$
\frac{d}{dt}F\ln F(v,x+tv,t)=\BB(F,F)(\ln F+1)(v,x+tv,t)\,,\quad\hbox{ for a.e. }(v,x,t)\in\RDRD\times I\,.
$$
Besides, one has 
$$
\ln\alpha+\ln\MM\le\ln F\le\ln\MM\,,\quad \hbox{ a.e. on }\RDRD\times I\,.
$$
Since $|\BB(F,F)|\le\AA(\MM)\MM$ a.e. on $\RDRD\times I$ by (\ref{B(F)<A(M)M}) and $\ln\MM=O(|x|^2+|v|^2)$ as $|x|+|v|\to\infty$, we conclude that $\BB(F,F)(\ln F+1)\in L^1(\RDRD\times I)$, so that
$$
\begin{aligned}
\frac{d}{dt}H[F](t)&=\frac{d}{dt}\iint_\RDRD F\ln F(v,x+tv,t)\,\dv\dx
\\
&=\iint_\RDRD\BB(F,F)(\ln F+1)(v,x+tv,t)\,\dv\dx
\\
&=\iint_\RDRD\BB(F,F)\ln F(v,y,t)\,\dv\dee y\le 0
\end{aligned}
$$
for a.e. $t\in I$. The last equality follows from the first conservation law in Theorem B (a), and the last inequality from statement (a) in Boltzmann's H Theorem.
\end{proof}

\end{appendix}

\section*{Acknowledgments}
This work has been supported by the NSF under grants  DMS-1109625, and NSF RNMS (KI-Net) grant \#11-07465.   Support from  the Institute of Computational Engineering and Sciences at the University of Texas Austin is gratefully acknowledged.


\end{document}